\newfont{\msbm}{msbm10 at 11pt}
\newcommand{\mseen}{$M$-seen}
\newcommand{\mseeing}{$M$-seeing}
\newcommand{\NN}{\mathbb N}
\newcommand{\q}{\quad}
\def\ni{\noindent}
\def\la{\lambda}
\newtheorem{Theo}{Theorem}
\newtheorem{Lemma}[Theo]{Lemma}
\newtheorem{Cor}[Theo]{Corollary}
\newtheorem{Prop}[Theo]{Proposition}
\begin{document}
\title{Percolation of arbitrary words in one dimension}
\author{Geoffrey R. Grimmett, Thomas M. Liggett and Thomas Richthammer\\\\
University of Cambridge and University of California, Los Angeles\\}
\maketitle

\begin{abstract}
We consider a type of long-range percolation 
problem on the positive integers, motivated by earlier work 
of others on the appearance of (in)finite words within a site percolation model. 
The main issue is whether a given infinite
binary word appears within an iid Bernoulli sequence at locations that satisfy
certain constraints. We settle the issue in some cases, and provide partial results in others.

\end{abstract}

\footnote{Mathematics Subject Classification. 60K35.}

\footnote{{\it Key words and phrases}:  Percolation of words, long-range percolation.}

\footnote{Research supported in part by NSF Grants DMS-0301795 and DMS-0707226.}

\section{Introduction}

Let $W=(w_1,w_2,\dots )\in\{0,1\}^\NN$ be an infinite binary word, 
and $X = (X_1,X_2,\dots)$ and $Y=(Y_1, Y_2,\dots )$ be independent Bernoulli sequences (i.e., iid sequences of Bernoulli random variables) 
with parameters  $p_X=P(X_i=1),p_Y=P(Y_j=1) \in (0,1)$. 
Let $M$ be a positive integer. An \emph{admissible ($M$-)embedding} 
of $W$ in $Y$ is a sequence $(m_i:i \ge 1)$ of integers such that
$Y_{m_i}=w_i$  and $1 \le m_{i}-m_{i-1}\leq M$ for each $i\geq 1$. 
(By default, we take $m_0=0$.)  We say that
\emph{$W$ is \mseen\ in $Y$} if there exists an $M$-admissible 
embedding of $W$ in $Y$. In this paper, we ask
whether or not the events $\{W \text{ is \mseen\ in } Y\}$ 
and $\{X \text{ is \mseen\ in } Y\}$ can have strictly positive probability. 

This question
is motivated by analogous questions considered in \cite{BK,KSZ0,KSZ} concerning
percolation of words on graphs such as $Z^d$. These
questions were partially answered for large $d$ in \cite{BK}, and on a
modified version of $Z^2$ in \cite{KSZ}. A version of the above question has been answered
in the affirmative in \cite{L} for large $M$ and $d=2$.
Our problem may be set in the context of long-range percolation, through a consideration of the
oriented graph with vertices $\{1,2,\dots \}$ in which there is an edge from $i$ to $j$ if
$1\le j-i\leq M$.
In this setting, our problem
corresponds to ordinary site percolation when $W$ is the constant word $w_i\equiv 1$, 
and to so-called $AB$ site percolation when $W$ is the alternating word with
$w_{2i}\equiv 0, w_{2i-1}\equiv 1$ (or \emph{vice versa}).

A further formulation of the problem resembles the famous problem of the clairvoyant
demon posed by Peter Winkler. As above, let $X$ and $Y$ be independent Bernoulli sequences with parameters $p_X,p_Y$; for simplicity we assume $p_X = p_Y = \frac12$.
We color the point $(i,j)$ of the first orthant of the square lattice $Z^2$
\emph{red} if $X_i=Y_j$.  Let the origin $(0,0)$ be red also. For $M \ge 1$, we define an
$M$-admissible path to be an infinite sequence $m=(m_0,m_1,m_2,\dots)$ satisfying 
$m_0=0$ and $1 \le m_{i+1} - m_i \le M$ for
all $i$, such that every point $(i,m_i)$, $i\ge 0$, is red. There exists an $M$-admissible
path if and only if $X$ is \mseen\ in $Y$. For references to the
clairvoyant demon problem, and for solutions to the related problem in which the
admissible paths for that problem are permitted to move upwards or downwards at each stage, see \cite{BBS, W}. 

Since $Y$ contains arbitrarily long sequences of 0's and arbitrarily long
sequences of 1's, it is easy to see that $P(W\text{ is \mseen\ in }Y)=0$ when $W$ is periodic. 
However, the situation for general words is not so clear. In Section 5, we will show that the truth of the statements  
``for every $M$, $P(X\text{ is \mseen\ in } Y)=0$" and 
``for every $M$, $P(W\text{ is \mseen\ in } Y)=0$" 
(for an arbitrary infinite word $W$) is 
independent of the parameters $p_X,p_Y$ of the Bernoulli sequences 
$X,Y$; see Theorem \ref{nondependence}. Therefore, except in that section, 
we will assume that 
$$
p_X = p_Y =\tfrac 12.
$$ 

In order to gain some insight into our problem, we consider the probability of 
\mseeing\ finite words $W$, and particularly how this probability depends on $W$.
Let 
$$
\alpha=1-2^{-M},\q \beta=2^{-M}.
$$
It is easy to check (as we will do in Section 3) that the probability 
of \mseeing\ a given word of length $n$ is minimized by the constant 
word $W=(1,1,\dots ,1)$ of length $n$, and that in this case, 
this probability equals $\alpha^n$. In the other direction, we consider the alternating word
 $A_n = (1,0,1,0,\dots)$ of length $n$; we could equally consider
the alternating word beginning with 0.
Since the infinite alternating word is periodic, the probability 
\begin{equation}
v_n = P(A_n \text{ is \mseen\ in } Y)
\label{vndef}
\end{equation}
tends to zero as $n\rightarrow\infty$. In Section 2, we will show how to compute $v_n$ exactly, and hence determine the exponential  rate at which this probability tends to zero. If $M=2$, for
example, $v_n\sim c(0.85\dots )^n$.  We will prove that the alternating word is
most likely to be seen in two cases: 

\begin{Theo} {\rm(a)} Let $M=2$ and $n \ge 1$. For any word $W$ of length $n$,
$$
P(W\text{\rm\ is \mseen\ in }Y)\leq v_n.
$$

\noindent
{\rm(b)} Let $M\ge 2$, and let $W_{p,q}$ be the word $(1,1,1,\dots ,0,0,0)$ comprising 
$p$ $1$'s followed by $q$ $0$'s. Then 
$$
P(W_{p,q}\text{\rm\ is \mseen\ in }Y)\leq v_{p+q}.
$$
\end{Theo}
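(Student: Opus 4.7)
The theorem has two parts with different restrictions, and I would treat them separately. The common intuition is that alternating steps $w_i \ne w_{i+1}$ impose ``fresh'' constraints on $Y$, while constant steps $w_i = w_{i+1}$ couple consecutive constraints in a way that heuristically makes the embedding harder. For part (a), I would exploit that $M = 2$. Let
\[
T_i(W, Y) = \{m \ge 1 : m = m_i \text{ in some valid $M$-embedding of } (w_1, \ldots, w_i) \text{ in } Y\},
\]
so that $W$ is $M$-seen iff $T_n \ne \emptyset$, and $T_i$ satisfies the recursion $T_i = \{m : Y_m = w_i,\ T_{i-1}\cap [m-2, m-1]\ne\emptyset\}$. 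Translating so that the rightmost point of $T_i$ sits at $0$, the dynamics of $T_i$ become a Markov chain on a small finite state space (together with a death state $\emptyset$), whose transition kernel depends only on $w_{i+1}\oplus w_i$ and on two fresh bits of $Y$ just past the current maximum. I would compute this kernel explicitly in the two cases $w_{i+1}=w_i$ (constant step) and $w_{i+1}\ne w_i$ (alternating step), and then argue, by direct comparison of the two kernels, that the probability $P(T_n\ne\emptyset)$ over $n$ steps is maximized when every step is alternating.

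For part (b), I would set up a distribution-preserving coupling. Given a realization $Y$ in which $W_{p,q}$ is $M$-seen, choose a canonical embedding $(m_1, \ldots, m_{p+q})$ of $W_{p,q}$ and define $Y'$ from $Y$ by flipping $Y_{m_j}$ for every $j$ with $w_j\ne a_j$, where $A_{p+q}=(a_1,\ldots,a_{p+q})$. Then $(m_j)$ is a valid $M$-embedding of $A_{p+q}$ in $Y'$ by construction; and since flipping a subset of bits of a Bernoulli$(\tfrac12)$ sequence preserves its distribution, $Y'\sim Y$. If the map $Y\mapsto Y'$ is injective on $\{W_{p,q}\text{ is $M$-seen}\}$, the inequality $P(W_{p,q}\text{ is $M$-seen})\le v_{p+q}$ follows immediately.

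The main obstacle in both parts is that the natural canonical choices (lexicographically smallest, or greedy, embedding) are not optimal when $W$ contains same-bit runs: a small example shows that a greedy $W_{p,q}$-embedding in $Y$ need not correspond to the greedy $A_{p+q}$-embedding in $Y'$, which breaks the injectivity required in part (b). I foresee two possible remedies: refine the canonical embedding to exploit the two-run structure of $W_{p,q}$ (e.g., greedy from the right, or greedy on the $1$-phase combined with a delayed transition to the $0$-phase chosen so as to be reconstructible from $Y'$ alone), or replace the injection by a weighted double-counting identity that sums over all valid embeddings with weights totalling one. For part (a), the analogous difficulty in the Markov-chain comparison is that some transitions involve a union over several admissible next positions; the small state space should make this manageable by direct case analysis on the four possible values of the two fresh bits.
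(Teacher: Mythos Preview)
Your plan for part (a) is in the right spirit but differs from the paper's argument and is not yet a proof. The paper does not track the full reachable set $T_i$; instead it runs a \emph{backward} recursion on the tail $W_m$ (the last $m$ letters of $W$), recording only two numbers: $w_m=P(W_m\text{ is $2$-seen})$ and $w_m'=P(\text{the standard embedding of }W_m\text{ starts at position }2)$. These satisfy
\[
w_m\le \tfrac34 w_{m-1}+\tfrac14 w_{m-1}',\qquad
w_m'\le \tfrac14 w_{m-1}+\tfrac14 w_{m-1}',
\]
with equality for the alternating word, and since the coefficients are nonnegative the comparison $w_m\le v_m$ follows by induction. Your forward chain on $T_i$ is more delicate than you suggest: $T_i$ can grow without bound (e.g.\ for $Y=111\cdots$), and the information needed to decide survival is not captured by ``two fresh bits past the maximum'' because lower elements of $T_i$ can rescue the process using $Y$-values that are already revealed. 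You would need to argue that some bounded summary of $(T_i,Y_{\le\max T_i})$ suffices; this may be doable for $M=2$, but it is exactly the reduction the paper achieves cleanly via the pair $(w_m,w_m')$.

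Your plan for part (b) has a genuine gap: the injection you propose does not exist, and the obstruction is not merely the choice of canonical embedding. Take $M=2$, $p=1$, $q=2$, so $W_{1,2}=(1,0,0)$ and $A_3=(1,0,1)$; the only index with $w_j\neq a_j$ is $j=3$, so your map flips $Y_{m_3}$. On the first six bits, the configurations $Y=100111$ and $Y=101011$ each admit a \emph{unique} $2$-embedding of $W_{1,2}$, namely $(1,2,3)$ and $(1,2,4)$ respectively, and flipping $Y_{m_3}$ sends both to $101111$. Thus no refinement of the canonical embedding can repair the collision, and the ``weighted double-counting'' alternative is too vague to assess. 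The paper proves (b) by an entirely different route: it bounds $P(W_{p,q}\text{ is $M$-seen})$ above by an explicit quantity $u_{p,q}$ expressed through the spacing variables $\tau_k$, then shows $u_{p,q}\le v_{p+q}$ via a discrete second-difference inequality ($\Delta u_{p,q}\le 0$) proved by a generating-function computation. If you want to pursue (b), that analytic comparison---not a bijective coupling---is where the work lies.
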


The first part of this theorem will be proved in Section 2, and the second in Section 4. 
As a consequence of Theorem 1(a), we have the following solution to our
main problem in case $M=2$:

\begin{Cor} If $M=2$, $P(W\text{\rm\ is \mseen\ in }Y)=0$ for every
infinite word $W$, and thus $P(X\text{\rm\ is \mseen\ in }Y)=0$. 
\end{Cor}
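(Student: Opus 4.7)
The plan is to deduce the corollary immediately from Theorem 1(a) together with the fact, already noted in the introduction, that $v_n \to 0$ as $n \to \infty$ (a consequence of the periodicity of the infinite alternating word $A=(1,0,1,0,\dots)$). The whole argument is a short monotonicity-and-Fubini chase, so there is really no serious obstacle.

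First I would fix an arbitrary infinite binary word $W=(w_1,w_2,\dots)\in\{0,1\}^\NN$ and, for each $n\ge 1$, write $W^{(n)}=(w_1,\dots,w_n)$ for its prefix of length $n$. If $(m_i:i\ge 1)$ is any $M$-admissible embedding of $W$ in $Y$, then its truncation $(m_1,\dots,m_n)$ is automatically an $M$-admissible embedding of $W^{(n)}$ in $Y$ (the defining conditions $Y_{m_i}=w_i$ and $1\le m_i-m_{i-1}\le M$ simply restrict from $i\ge 1$ to $1\le i\le n$), so
$$
\{W \text{ is \mseen\ in } Y\} \subseteq \{W^{(n)} \text{ is \mseen\ in } Y\}.
$$
Applying Theorem 1(a) to the finite prefix $W^{(n)}$ then yields
$$
P(W \text{ is \mseen\ in } Y) \le P(W^{(n)} \text{ is \mseen\ in } Y) \le v_n,
$$
and letting $n\to\infty$ gives the first assertion.

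For the second assertion I would exploit the independence of $X$ and $Y$. Writing $\mu$ for the distribution of $X$ on $\{0,1\}^\NN$, Fubini's theorem gives
$$
P(X \text{ is \mseen\ in } Y) = \int P(W \text{ is \mseen\ in } Y) \, d\mu(W) = 0,
$$
since the integrand vanishes for every $W$ by the first part. The only two points meriting any attention are the prefix-monotonicity used above, and the appeal to independence of $X$ and $Y$ needed to pass from the deterministic statement to one about the random word $X$.
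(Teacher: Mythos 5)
Your argument is correct and is exactly the route the paper intends: the paper offers no separate proof of the Corollary, presenting it as an immediate consequence of Theorem 1(a) via the prefix inclusion $\{W \text{ is \mseen}\}\subseteq\{W^{(n)} \text{ is \mseen}\}$, the fact that $v_n\to 0$ (established by the recursion in Section 2), and conditioning on the independent word $X$. Nothing is missing.
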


A useful tool in our analysis is the following sequence of `spacing' random variables.
Given a finite or infinite word $W$, define
$T_0 = 0$ and, recursively, 
$$
T_{k+1}=\min\{i>T_k: Y_i=w_{k+1}\},\quad \tau_{k+1}= T_{k+1} - T_k.
$$
Note that, while the values of $\tau_1,\tau_2,\dots $  depend on the choice of $W$, 
for any $W$ they are
iid random variables with the geometric distribution with parameter $\frac 12$.
The values of $\tau_1,\tau_2,\dots ,\tau_n$ do not in general determine whether 
or not the word $(w_1,w_2,\dots , w_n)$ is \mseen. (An example illustrating this is given
in Section 3.)
However, they do so for the constant and alternating words.

\begin{Theo}\label{spacing} {\rm(a)} The constant word of length $n$ is \mseen\ in $Y$ if and only if $\tau_k\leq M$
for all $1\leq k\leq n$.

\noindent
{\rm(b)} The alternating word $A_n$ of length $n$ is \mseen\ in $Y$ 
if and only if 
$$
T_k\leq kM\text{ for all }1\leq k\leq n\quad 
\text{and}\quad T_k-T_j<(k-j+1)M\text{ for all }0\leq j<k\leq
n.
$$ 
\end{Theo}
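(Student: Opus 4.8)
The plan is to treat both parts as bi-implications, organized around the greedy leftmost occurrence $(T_k)$ and, for part (b), the sets of reachable endpoints of partial embeddings. The fact I will use repeatedly is the monotonicity $T_k\le m_k$, valid for the endpoint of \emph{any} embedding $(m_i)$, admissible or not: indeed $T_0=0=m_0$, and if $T_{k-1}\le m_{k-1}$ then, since $m_k>m_{k-1}\ge T_{k-1}$ and $Y_{m_k}=w_k$, the position $m_k$ competes in the minimum defining $T_k$, so $T_k\le m_k$. In particular, if $W$ is \mseen\ by an admissible $(m_i)$ then $T_k\le m_k\le m_0+kM=kM$, which is exactly the first condition in (b), and the monotonicity will also drive the "only if" parts below.

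For part (a) (constant word, all $w_i=1$) the "if" direction is immediate: when every $\tau_k\le M$ the leftmost occurrences already form an admissible embedding, taking $m_k=T_k$, since $m_k-m_{k-1}=\tau_k\in[1,M]$. For "only if" I argue by contraposition. Suppose $\tau_k>M$ for some $k\le n$; then $T_{k-1}$ and $T_k$ are consecutive $1$'s of $Y$ with no $1$ strictly between them and $T_k-T_{k-1}>M$. Take any admissible embedding $(m_i)$: its entries are $n$ distinct $1$'s, so $m_n\ge T_n\ge T_k$, while $m_1\le M<T_k$ (as $T_k\ge\tau_k>M$); hence some consecutive pair straddles the gap, $m_i<T_k\le m_{i+1}$. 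Since $m_i$ is a $1$ lying strictly below $T_k$ we have $m_i\le T_{k-1}$, so $m_{i+1}-m_i\ge T_k-T_{k-1}>M$, contradicting admissibility. Thus no admissible embedding exists.

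For part (b) the engine is a description of the reachable set $R_k=\{p:\,Y_p=w_k,\ w_1\cdots w_k\text{ has an admissible embedding ending at }m_k=p\}$, with $R_0=\{0\}$ and $R_k=\{p:Y_p=w_k,\ [p-M,p-1]\cap R_{k-1}\neq\emptyset\}$; the word $A_n$ is \mseen\ iff $R_n\neq\emptyset$. Using that $A_n$ alternates I will prove by induction the structural lemma
$$R_k=\{p:\,Y_p=w_k,\ T_k\le p\le u_k\},\qquad u_k=\max\{p\le u_{k-1}+M:\,Y_p=w_k\},$$
valid as long as $u_{k-1}+M\ge T_k$, with $R_k=\emptyset$ at the first step where $u_{k-1}+M<T_k$. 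The role of the alternation is that consecutive $w_{k-1}$- and $w_k$-values differ, so the leftmost reachable $w_k$-position is exactly $T_k$ and every $w_k$-position in $[T_k,u_k]$ inherits a witness from $R_{k-1}$; the frontier $u_k$ is the rightmost $w_k$-value not exceeding the reach $u_{k-1}+M$.

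Granting the lemma, \mseeing\ $A_n$ is equivalent to the survival condition $u_{k-1}+M\ge T_k$ for all $k\le n$, and both directions of (b) reduce to comparing this with conditions I and II. Condition I is forced as in the first paragraph. For the rest I will analyse the frontier through the deficit $d_k=kM-u_k\ge0$: it is unchanged when the reach $u_{k-1}+M$ falls in a $w_k$-run, and jumps up when the reach lands inside a long $w_{k-1}$-run, where $u_k$ must \emph{snap back} to the end of the preceding $w_k$-run. A window $T_j,\dots,T_k$ with $T_k-T_j\ge(k-j+1)M$ is precisely a block of $k-j$ runs long enough to force the accumulated deficit past the slack $kM-T_k$, producing a death; conversely, conditions I and II keep the reach ahead of $T_k$ at every step. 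The main obstacle is exactly this snap-back bookkeeping: the naive bounds $T_k\le m_k\le u_j+(k-j)M$ are too weak, because the frontier $u_j$ can legitimately run much further ahead than $T_j+M$, so condition II must be extracted by tracking how many consecutive equal-valued positions the frontier is forced to cross, rather than by a one-step displacement estimate.
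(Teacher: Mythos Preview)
Your treatment of part (a) is correct and essentially matches the paper's: both use the greedy positions $T_k$ for the ``if'' direction, and for ``only if'' both observe that any admissible embedding of the constant word must step across every gap between consecutive $T_{k-1},T_k$, so no gap can exceed $M$.

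For part (b), your structural lemma is false. Take $M=5$, the alternating word with $w_1=1$, and
\[
Y = 1\;0\;1\;1\;1\;1\;1\;1\;0\;0\;1\;\cdots
\]
Then $T_1=1$, $T_2=2$, $T_3=3$, and one checks directly that $R_1=\{1,3,4,5\}$ (so $u_1=5$), $R_2=\{2,9,10\}$ (so $u_2=10$), and $R_3=\{3,4,5,6,7,11\}$. In particular $8\notin R_3$, because the only $0$-positions in $[3,7]$ do not exist, so there is no witness in $R_2\cap[3,7]$. Yet $Y_8=w_3=1$ and $T_3=3\le 8\le u_3$ (since $u_3\ge 11$). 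So $R_3$ is \emph{not} the set of $w_3$-positions in $[T_3,u_3]$: the reachable set can have holes when the frontier $u_{k-1}$ jumps ahead of a long $w_k$-run. Your inductive justification (``every $w_k$-position in $[T_k,u_k]$ inherits a witness from $R_{k-1}$'') is precisely where this breaks.

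Even setting the lemma aside, the crucial equivalence between your survival condition and conditions I, II is only described heuristically (``a window\dots is precisely a block long enough to force the accumulated deficit past the slack''), and the ``only if'' direction for condition~II is not argued at all. The paper avoids these difficulties by tracking not the true maximum $u_k$ of $R_k$ but the capped quantity $S_k=\min\{T_{k+1}-1,\,S_{k-1}+M\}$, which never leaves the $k$th run. It then proves the cycle
\[
\bigl(T_k\le S_k\ \forall k\bigr)\ \Rightarrow\ A_n\text{ is }M\text{-seen}\ \Rightarrow\ \text{I and II}\ \Rightarrow\ \bigl(T_k\le S_k\ \forall k\bigr),
\]
the last implication via a short induction showing $T_k\le S_i+(k-i)M$ for $i=0,1,\dots,k$. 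The capping is exactly what makes the bookkeeping tractable; your $u_k$ records strictly more information, but, as the counterexample shows, not in the simple interval form you need.
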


This theorem will be proved in Section 3.

Let $W$ be a word of length $n$, and let $N_n=N_n(W)$ be the number of
$M$-admissible embeddings of $W$ in $Y$.
It is easy to see that 
$$
E(N_n) = (M/2)^n.
$$
The second moment of $N_n$ can be expressed in the following way.
Let $J=(J_0.J_1,J_2,\dots)$ and $K=(K_0,K_1,K_2,\dots)$  be independent
random walks on $Z$ starting at $J_0=K_0=0$ with, as step-size distribution, the uniform distribution
on the finite set $\{1,2,\dots,M\}$.

\begin{Theo}\label{grg}
{\rm(a)}
For any word $W$ of length $n$, and any $M\ge 1$,
\begin{equation}
E(N_n^2) = E(N_n)^2 E\left(\prod_{(r,s): J_r=K_s} 2 \cdot 1(w_r=w_s)\right),
\label{Nvar}
\end{equation}
where the product is over $r,s \in \{1,2,\dots,n\}$, and $1(A)$ denotes the indicator
function of $A$. 

\noindent{\rm(b)}
Let $X$ be the random word of length $n$, comprising 
random letters with the Bernoulli $(\tfrac12)$
distribution. Then
$$
E(N_n(X)^2) = E(N_n(X))^2 E(2^{Z_n}),
$$
where $Z_n$ is the number of visits to zero between times $1$ and $n$ made by the random walk
$J-K$.
\end{Theo}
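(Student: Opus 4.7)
My plan for part (a) is to expand
$$
N_n = \sum_m \prod_{i=1}^n \mathbf{1}(Y_{m_i} = w_i),
$$
where $m = (m_1, \ldots, m_n)$ ranges over all $M^n$ integer sequences with $m_0 = 0$ and $1 \le m_i - m_{i-1} \le M$, and compute $E(N_n^2)$ term by term. Fix a pair $(m, m')$ of such sequences and set $C(m,m') = \{(r,s) : m_r = m'_s\}$. Because both $m$ and $m'$ are strictly increasing, $C$ is a partial matching between $\{1, \ldots, n\}$ and itself, and the union $\{m_1, \ldots, m_n\} \cup \{m'_1, \ldots, m'_n\}$ has exactly $2n - |C|$ elements. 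The joint event $\{Y_{m_i} = w_i \ \forall i,\ Y_{m'_j} = w_j \ \forall j\}$ has probability $2^{-(2n-|C|)}$ when $w_r = w_s$ for every $(r,s) \in C$ and $0$ otherwise, and both cases are captured by
$$
2^{-2n}\prod_{(r,s) \in C(m,m')} 2 \cdot \mathbf{1}(w_r = w_s).
$$
Summing over the $M^{2n}$ pairs and identifying the result as $M^{2n}$ times an expectation with respect to the independent random walks $J, K$ gives \eqref{Nvar}, using $E(N_n)^2 = (M/2)^{2n}$.

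For part (b), I would condition on $X$ and apply part (a). Since $E(N_n(X))^2 = (M/2)^{2n}$ does not depend on $X$, taking expectation over $X$ (independent of $J, K$) through the identity yields
$$
E(N_n(X)^2) = E(N_n(X))^2 \cdot E\!\left[\, E_X \!\! \prod_{(r,s): J_r = K_s} 2 \cdot \mathbf{1}(X_r = X_s)\right].
$$
Since $X$ is iid Bernoulli$(\tfrac12)$, the inner expectation equals $2^{|C| + c(C) - n}$, where $c(C)$ is the number of equivalence classes on $\{1, \ldots, n\}$ under the relation generated by $\{r \sim s : (r,s) \in C\}$ (one counts the $2^{c(C)}$ assignments $X \in \{0,1\}^n$ constant on each class). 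The heart of part (b) is therefore the combinatorial identity
$$
c(C) + |C| - n \;=\; Z_n.
$$

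To establish this, I would form the bipartite graph $B$ on the disjoint union $\{1, \ldots, n\}_L \sqcup \{1, \ldots, n\}_R$ with an edge joining $r_L$ and $s_R$ for each $(r,s) \in C$. Since $C$ is a matching, $B$ is a disjoint union of $|C|$ edges and $2n - 2|C|$ isolated vertices, hence has $2n - |C|$ components. The equivalence classes in question are exactly the components of the quotient obtained by identifying $r_L$ with $r_R$ for every $r$. Each of the $n$ identifications either merges two distinct components (reducing the count by one) or leaves it unchanged; the latter occurs precisely when $r_L$ and $r_R$ are already in the same component of $B$, which, since $B$ is a matching, happens if and only if the edge $r_L\text{--}r_R$ belongs to $B$, i.e., $(r,r) \in C$ and so $J_r = K_r$. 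Hence exactly $n - Z_n$ identifications genuinely merge, giving $c(C) = (2n - |C|) - (n - Z_n)$, as required.

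The main obstacle is this last identity; the two moment expansions around it are routine bookkeeping. The essential structural input is the strict monotonicity of $J$ and $K$, which forces $C$ to be a partial matching and makes $B$ simple enough that the component count collapses cleanly to $2^{Z_n}$.
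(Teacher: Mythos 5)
Your part~(a) is the paper's own computation: sum over ordered pairs of admissible sequences, note that the joint probability is $(\tfrac12)^{2n-|C|}$ times the consistency indicator $\prod_{(r,s)\in C}1(w_r=w_s)$, and normalize by $E(N_n)^2=(M/2)^{2n}$. No issues there.

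In part~(b) there is a genuine gap at the identity $c(C)+|C|-n=Z_n$. When you perform the $n$ identifications $r_L\sim r_R$ one after another, the $r$-th identification fails to merge two components precisely when $r_L$ and $r_R$ already lie in the same component of $B$ \emph{together with the identification edges added so far} --- not of $B$ alone, as you assert. Equivalently, the number of non-merging identifications is the number of independent cycles of the graph $B\cup\{r_L\text{--}r_R: 1\le r\le n\}$, and you must show that the only cycles of that graph are the doubled edges at diagonal pairs $(r,r)\in C$. The matching property of $C$ does not give this, and your identity is false for a general partial matching: with $n=2$ and $C=\{(1,2),(2,1)\}$ one gets $c(C)=1$, $|C|=2$, $Z=0$, hence $c(C)+|C|-n=1\ne 0$. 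What rescues the argument is a property you never actually invoke: the partial bijection $\sigma$ defined by $j_r=k_{\sigma(r)}$ is \emph{order-preserving}, because $j$ and $k$ are both strictly increasing; an order-preserving partial injection has no periodic orbits except fixed points, so every cycle of $B\cup\{r_L\text{--}r_R\}$ is a $2$-cycle at a fixed point of $\sigma$, i.e.\ at an index with $J_r=K_r$, and the count then collapses to $Z_n$ as you want. Once this is supplied, your route is essentially equivalent to the paper's, which disposes of (b) by asserting $E(I_{j,k}(X))=(\tfrac12)^{|j\cap k|-Z(j,k)}$ --- that is, that the off-diagonal constraints $X_r=X_s$ are jointly independent --- a claim resting on exactly the same acyclicity fact.
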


This theorem will be proved in Section 6. 
We will also see there that $E(2^{Z_n})$ is asymptotic to a constant multiple of $c_M^n$ for some $c_M>1$.
In the case $M=2$, we have $c_2=\frac43$. By part (a) of this theorem,
the constant word maximizes the variance of $N_n(W)$. 

\section{Recursions for $v_n$}

Let $A_n = (1,0,1,0,\dots)$ be the alternating word of length $n$ starting with 1.
(By symmetry,  probabilities for alternating words starting with 0 are the same as for $A_n$.)
In this section, we first compute the $v_n$ given in \eqref{vndef},  and 
then we prove Theorem 1(a). The computation of $v_n$ is done recursively; the
recursions satisfied by $v_n$ will be used in Section 4 in the proof of Theorem 1(b).

If an admissible  embedding of $A_n$ in $Y$  exists, 
we define the \emph{standard embedding} to be that whose sequence of positions 
$(m_i: i \ge 1)$ is earliest in the usual lexicographic order.
We will use the following notation:
\begin{itemize}
\item $v_{n,k} :=$ the probability that $A_n$ possesses an admissible 
embedding, and its standard embedding starts at position $k$. 
\item $v_n := \sum_{k=1}^M v_{n,k}=$ the probability that $A_n$
possesses an admissible embedding.
\item $v_n' := v_{n,M}.$
\end{itemize}

\begin{Prop} The sequences $v_n$ and $v_n'$ satisfy the following recursions:
\begin{equation}
v_n=\alpha v_{n-1}+(\alpha-M\beta)v_{n-1}'\quad\text{and} 
\quad v_n'=\beta v_{n-1}+(M-1)\beta v_{n-1}'
\label{pairrecursion}
\end{equation}
for $n\geq 1$, with initial conditions $v_0=1$ and $v_0'=0$, and
\begin{equation}
v_{n+1}=(\alpha+(M-1)\beta)v_n-\beta(M-2\alpha)v_{n-1}
\label{singlerecursion}
\end{equation}
for $n\geq 1$, with initial conditions $v_0=1$ and $v_1=\alpha$.
\end{Prop}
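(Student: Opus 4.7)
The plan is to establish the refined identity
\[
v_{n,k} \;=\; 2^{-k}\bigl(v_{n-1} + (k-1)\,v_{n-1}'\bigr), \qquad k = 1, \ldots, M,
\]
where $v_{n,k}$ is defined just before the Proposition, so that $v_n = \sum_{k=1}^M v_{n,k}$ and $v_n' = v_{n,M}$. Both identities in \eqref{pairrecursion} then fall out: setting $k=M$ gives the second recursion, while summing over $k$ and evaluating the elementary sums $\sum_{k=1}^M 2^{-k} = \alpha$ and $\sum_{k=1}^M (k-1)\,2^{-k} = \alpha - M\beta$ gives the first. The initial conditions $v_0 = 1$, $v_0' = 0$ follow immediately from the definitions. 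Finally, \eqref{singlerecursion} is obtained by applying the Cayley--Hamilton identity to the $2 \times 2$ transition matrix of the pair recursion, whose trace and determinant are $\alpha+(M-1)\beta$ and $\beta(M-2\alpha)$ respectively.

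To prove the refined identity, I condition on the initial pattern $(Y_1, \ldots, Y_k)$. Writing $G_j = \{Y_j = 1\} \cap H_j$ for the event that some admissible embedding of $A_n$ begins with $m_1 = j$, where $H_j$ is the event that the alternating word of length $n-1$ starting with $0$ admits an embedding in $Y_{j+1}, Y_{j+2}, \ldots$ with first step in $[1,M]$, we have $\{m_1 = k\} = G_k \cap \bigcap_{j<k} G_j^c$. The structural observation driving the proof is that only the $k$ non-decreasing patterns of $(Y_1,\dots,Y_k)$ with $Y_k = 1$---namely $(0,\ldots,0,1,\ldots,1)$ with the switch at some $i \in \{1,\ldots,k\}$---can contribute. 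For such a pattern, conditioning on $Y_i = \cdots = Y_k = 1$ collapses $H_j$ (for $j \in [i,k]$) to the event that the length-$(n-1)$ alternating word embeds in $Y_{k+1}, Y_{k+2}, \ldots$ with first step in $[1, M-k+j]$, so $H_i \subseteq H_{i+1} \subseteq \cdots \subseteq H_k$ forms a nested chain, and
\[
P\Bigl(H_k \cap \bigcap_{j=i}^{k-1} H_j^c\Bigr) \;=\; P(H_k \setminus H_{k-1}) \;=\; v_{n-1}'
\]
when $i < k$ (the last equality because $H_k \setminus H_{k-1}$ is precisely the event that the length-$(n-1)$ alternating word's standard embedding starts at $M$), while $P(H_k) = v_{n-1}$ when $i = k$. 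Weighting by the pattern probability $2^{-k}$ and summing over the $k$ values of $i$ yields the refined identity.

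The main obstacle is showing that every non-monotonic pattern contributes zero. Suppose $(Y_1, \ldots, Y_k)$ contains a descent, and let $a$ be the largest index with $Y_a = 1$ and $Y_{a+1} = 0$; because $Y_k = 1$, necessarily $a \le k-2$. I claim that $H_k \subseteq H_a$ on this event, so $G_k \cap G_a^c = \emptyset$ and no standard embedding can start at $k$. Given any admissible embedding $(j_1, j_2, \ldots, j_{n-1})$ realizing $H_k$, the sequence
\[
(1,\; k-a,\; k-a+j_1,\; k-a+j_2,\; \ldots,\; k-a+j_{n-3})
\]
is an admissible embedding for $H_a$: the first two entries land at the pattern positions $Y_{a+1} = 0$ and $Y_k = 1$, with valid intermediate step $k-a-1 \in [1,M]$; the remaining entries reuse the first $n-3$ landing positions of the $H_k$-embedding, and since every index shifts by exactly~$2$, the parity of the alternating word---and hence every letter---is preserved. (For $n \le 3$ the construction terminates after the first two entries and still produces a valid embedding.) This collapse of non-monotonic contributions completes the proof of the refined identity.
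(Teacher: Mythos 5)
Your proof is correct and takes essentially the same route as the paper: your refined identity $v_{n,k}=2^{-k}\bigl(v_{n-1}+(k-1)v_{n-1}'\bigr)$ is exactly the paper's intermediate recursion, obtained as there by noting that the prefix $(Y_1,\dots,Y_{k-1})$ must be of the form $(0,\dots,0,1,\dots,1)$ and splitting into the all-zeros case (contributing $v_{n-1}$) and the cases with at least one $1$ (each contributing $v_{n-1}'$). You simply supply more formal detail (the shift-by-two embedding showing non-monotone prefixes contribute nothing, the nested chain $H_i\subseteq\cdots\subseteq H_k$) than the paper's terse justification, and your Cayley--Hamilton step is just the paper's elimination of $v_n'$.
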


Note that in \eqref{pairrecursion},
unlike \eqref{singlerecursion},  all the coefficients 
are nonnegative. This will enable us to compare solutions to
recursive inequalities.

It is easy to solve the recursion \eqref{singlerecursion} explicitly. 
The characteristic polynomial is 
\[
f(\la) = \la^2 -  (\alpha+(M-1)\beta) \la + \beta(M-2\alpha).
\]
Note that, for $M \ge 2$,
\begin{align*}
f(0) =\beta(M-2 \alpha) >0,\q &f(M\beta)=2\beta(M\beta-\alpha)< 0,\\
f(\alpha) =\beta(M\beta-\alpha)<0,\q 
&f(1)= 2\beta^2 >0.
\end{align*}
Therefore, the two roots of $f$ lie in the disjoint intervals $(0,M\beta)$ and $(\alpha,1)$, respectively.
This implies that $v_n \to 0$ exponentially fast as $n\to\infty$, and is
a quantification of the observation that $Y$ contains
no admissible embedding of $A_n$ if there appears, sufficiently early
in $Y$, a consecutive subsequence of $M+1$ letters all of which are $0$ (respectively $1$).
A word of caution: while $v_n$ tends to zero exponentially rapidly, the convergence
can still in a sense be quite slow. For example, if $M=5$, the larger root of
$f$ is $.9978\dots$.

\begin{proof}
A Bernoulli sequence $Y$ contributing to $v_{n,k}$ must satisfy $Y_k=1$, 
and the preceding sequence $(Y_1,\dots ,Y_{k-1})$ cannot contain 1 and  0 in that order, since
if it did, there would be an admissible embedding starting before position $k$.
Therefore, $(Y_1,\dots ,Y_{k-1})$ must be of the form
$(0,\dots,0,1,\dots,1)$. We distinguish two cases for the starting 
sequence: all 0's (Case 1) and at least one 1 (Case 2). 
In Case 1, the only condition on $(Y_{k+1},\dots )$ is that it must contain an admissible embedding
of the remainder of $A_n$. 
In Case 2, $(Y_{k+1},\dots )$ must contain an admissible embedding of the remainder
whose standard embedding starts at (relative) position $M$. 
(If it started earlier, then the 1 at position $k-1$ would 
initiate an earlier embedding.) This yields the recursion
\begin{equation}
v_{n,k} = \frac{1}{2^k} v_{n-1} + \frac{k-1}{2^k} v_{n-1,M}
\q\text{for } 1 \le k \le M,\ n \ge 1, 
\label{recur2}
\end{equation}
with initial condition
$v_{1,k} = 2^{-k}$ for $1 \le k \le M$.

The first relation in  \eqref{pairrecursion} is obtained by summing 
\eqref{recur2} over $k$, and the second by setting $k=M$.
Finally, one can eliminate $v_n'$ from \eqref{pairrecursion}
to obtain \eqref{singlerecursion}. 
\end{proof}

\begin{proof}[Proof of Theorem 1(a)]
Let $M=2$  and let
$W \in \{0,1\}^n$ be a word of length $n$.
For $1 \le m \le n$, let $W_m$ be the word comprising 
the last $m$ digits of $W$. 
We will use notation similar to that at the beginning of this section.
\begin{itemize}
\item $w_{m,k} :=$ the probability that $W_m$ possesses an admissible embedding, and its
standard embedding starts at position 
$k \in \{1,2\}$. 
\item $w_m :=  w_{m,1} + w_{m,2} = $ 
the probability that $W_m$ has an admissible embedding.
\item 
$w_m' := w_{m,2}$
\end{itemize}
We follow the same procedure as we did for the alternating word $A_n$. Unlike
that case, we shall obtain only a recursive estimate 
from above.

Denote the first digit of $W_m$ by $a \in \{0,1\}$, 
and let $b=1-a$ be the complementary digit. 
A Bernoulli sequence $Y$ contributing to $w_{m,1}$ must contain 
$a$ at position 1 and the word 
$W_{m-1}$ following. 
This gives 
\begin{equation}
w_{m,1} = \tfrac 1 2 w_{m-1}.
\label{2.1}
\end{equation} 
If $Y$ contributes to $w_{m,2}$,
there are two cases. 
If $Y$ starts with $a$, 
then the second digit must also be $a$, and subsequently
the $Y$ must contain $W_{m-1}$, but not 
starting at the next digit. (Otherwise there 
would be a standard embedding starting at position 1.) 
If $Y$ starts with $b$, 
then the second digit has to be $a$, and subsequently
$Y$ must contain $W_{m-1}$.
This gives 
\begin{equation}
w_{m,2} \le \tfrac 1 4 w_{m-1,2} + \tfrac 1 4 w_{m-1}.
\label{2.2}
\end{equation}

Note that equality need not hold in \eqref{2.2}.
Suppose that both $Y$ and $W_m$ begin with the letters $aa$, and that $(Y_3,Y_4,\dots)$
contains an admissible embedding of both $W_{m-2}$ and $W_{m-1}$. In this
case, $Y$ does not contribute to the left side of \eqref{2.2} but it does to the first
term on the right side.

From \eqref{2.1}--\eqref{2.2}, we deduce the recursive inequalities
\begin{equation*}
w_{m} \le \tfrac 3 4 w_{m-1}+ \tfrac 1 4 w_{m-1}' \; 
 \text{ and } \;  
w_{m}' \le\tfrac 1 4 w_{m-1}+ \tfrac 1 4 w_{m-1}', \quad 
\text{ where }  \quad w_1 = \tfrac 3 4, w_1'= \tfrac 1 4. 
\end{equation*}
By comparison with the recursion formula \eqref{pairrecursion}
with $M=2$,
\begin{equation} 
v_{m} = \tfrac 3 4 v_{m-1} + \tfrac 1 4  v_{m-1}'
\; \text { and } \; 
v_m' = \tfrac 1 4 v_{m-1} + \tfrac1 4 v_{m-1}', \quad 
\text{where} \quad v_{1} = \tfrac 3 4,\ v_1' = \tfrac{1}{4},
\label{recurm=2}
\end{equation}
we obtain by induction on $m$ and the positivity of the coefficients
in \eqref{recurm=2} that $w_m \le v_m$ for $1 \le m \le n$. In particular, $w_n \le v_n$
as claimed.
\end{proof}

\ni{\bf Remark.} The above method does not work for $M\ge 3$, since in
this case the 
coefficients of the recursive inequalities for $w_n$, $w_n'$ 
do not match the coefficients of the recursion 
for $v_n$, $v_n'$. 

\section{Relations to the spacing random variables}

In this section, we prove several results relating \mseen\ finite words to
inequalities satisfied by the spacing variables $\tau_k$ and their partial sums.
When the word is either constant or alternating, these are equivalences, and
were stated as Theorem  \ref{spacing} in the Introduction. For general words, we only have
one direction --- if the word is seen, then the spacing variables satisfy certain
inequalities. We will say that $W=(w_1, w_2,\dots )$ is seen at $(m_1,m_2,\dots )$ 
if $m_1<m_2<\cdots$ and $Y_{m_i}=w_i$ for each $i$. 

First, we give an example to show that seeing a word $W$ of length $n$ is not in general
determined by the values of $\tau_1,\dots ,\tau_n$.  Suppose $n=4$, $M=2$, $W=(1,1,0,0)$,
and the Bernoulli sequence starts with $110110\cdots$. Then $\tau_1=1$, $\tau_2=1$, $\tau_3=1$,
$\tau_4=3.$ If $W$ is to be seen, then it must be seen at locations $m_1=2$, $m_2=4$, $m_3=6$,
and $m_4=7$ or $8$. Thus, it is seen if and only if one of the next two digits in the Bernoulli
sequence is a 0, but this cannot be determined from the first four $\tau_k$.

\begin{Prop}\label{P:firstbound} Let $W=(w_1,\dots , w_n)$. 
If $W$ is seen at $(m_1,\dots ,m_n)$, then
$T_k\leq m_k$ for $1\leq k\leq n$. In particular, if $W$ is \mseen, 
then $T_k\leq kM$ for all $1\leq k\leq n.$
\end{Prop}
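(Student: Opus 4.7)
The plan is to prove the inequality $T_k \le m_k$ by induction on $k$, exploiting the fact that $T_k$ is defined as a minimum over a set whose membership condition is weaker than that imposed on $m_k$ by the embedding. In effect, the sequence $(T_k)$ is the leftmost embedding of the word $W$ into $Y$ (ignoring the jump-size constraint), so any other compatible sequence must dominate it pointwise.

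More concretely, the base case is $T_0 = 0 = m_0$ by convention. For the inductive step, suppose $T_{k-1} \le m_{k-1}$. Since the embedding $(m_1,\dots,m_n)$ is strictly increasing with $Y_{m_k} = w_k$, we have $m_k > m_{k-1} \ge T_{k-1}$ and $Y_{m_k} = w_k$, so $m_k$ lies in the set $\{i > T_{k-1} : Y_i = w_k\}$ whose minimum defines $T_k$. Hence $T_k \le m_k$, completing the induction.

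For the second assertion, if $W$ is $M$-seen, then by definition there exists an admissible embedding $(m_i)$ satisfying $m_i - m_{i-1} \le M$ for all $i \ge 1$, with $m_0 = 0$. Telescoping this bound yields $m_k \le kM$, and combining with the first part gives $T_k \le m_k \le kM$ for all $1 \le k \le n$.

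There is no real obstacle here: the proposition is essentially a formal restatement that the greedy embedding $(T_k)$ is minimal among all embeddings. The only care required is to unpack the recursive definition of $T_k$ and check that $m_k$ satisfies the two conditions (being strictly greater than $T_{k-1}$ and agreeing with $w_k$) that qualify it for membership in the minimizing set.
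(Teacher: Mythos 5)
Your proof is correct and is essentially the paper's own argument: induction on $k$, using that $m_k$ belongs to the set $\{i>T_{k-1}: Y_i=w_k\}$ whose minimum defines $T_k$, followed by telescoping $m_k=\sum_{i=1}^k(m_i-m_{i-1})\le kM$ for the second assertion. No differences worth noting.
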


\begin{proof} 
Let $m_0 := T_0 = 0$. We will prove $T_k \le m_k$ by induction on $k$. 
For the induction step we assume $T_k \le m_k$. Let $a = w_{k+1}$. 
By definition $T_{k+1}$ is the first location of an $a$ after location $T_{k}$, and $m_{k+1}$ 
is some location of an $a$ after location $m_{k} \ge T_k$, which 
immediately implies $T_{k+1} \le m_{k+1}$. 
Finally, if $W$ is \mseen, then 
$$
T_k\leq m_k=\sum_{i=1}^k(m_i-m_{i-1})\leq kM.
$$
\end{proof}

The next result implies that the probability of \mseeing\ a word
of length $n$ is minimized by the constant word, and in that case, this probability
is $\alpha^n$.

\begin{Prop}\label{P:constant} 
{\rm(a)} If $W$ is a word of length $n$ and $\tau_1\leq M$, $\dots$,
$\tau_n\leq M$, then $W$ is \mseen. 

\ni{\rm(b)} If a constant word of length $n$ is \mseen, then $\tau_1\leq M$, $\dots$, $\tau_n\leq M$.
\end{Prop}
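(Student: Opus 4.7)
My plan is to handle the two parts separately, since (a) is constructive while (b) is an impossibility argument.

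For part (a), the natural candidate for the embedding is $m_k := T_k$. By the very definition of $T_k$, we have $Y_{T_k} = w_k$, and the gaps satisfy $m_k - m_{k-1} = \tau_k$, which lies in $[1,M]$ by hypothesis (recall $\tau_k \ge 1$ always, since $T_k > T_{k-1}$). Hence $(T_1, \ldots, T_n)$ is an $M$-admissible embedding of $W$, and (a) reduces to this one-line verification. This also makes precise the intuition that the $T_k$-embedding is the ``greediest'' admissible candidate.

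For part (b), I would argue by contrapositive. Let $c$ denote the constant value of $W$, and suppose $\tau_k > M$ for some $1 \le k \le n$. By definition of $T_{k-1}$ and $T_k$ the positions $T_{k-1}+1, T_{k-1}+2, \dots, T_{k-1}+M$ all carry the opposite letter $1-c$, so $Y$ contains a run of $M$ consecutive ``wrong'' sites. I would then show that no $M$-admissible embedding can cross such a run. Any admissible embedding $(m_1, \dots, m_n)$ of the constant word must satisfy $Y_{m_i} = c$, hence $\{m_1, \dots, m_n\} \subseteq \{T_1, T_2, \dots\}$. Since only $k-1$ members of this latter set lie in $\{1, \dots, T_{k-1}\}$ while the embedding has length $n \ge k$, at least one $m_i$ must exceed $T_{k-1}$. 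Picking the smallest such index $i$ and using the convention $m_0 = 0$, we have $m_{i-1} \le T_{k-1}$ and, since no $c$-position lies strictly between $T_{k-1}$ and $T_k$, we have $m_i \ge T_k$. The step size then satisfies $m_i - m_{i-1} \ge T_k - T_{k-1} = \tau_k > M$, contradicting $M$-admissibility.

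The argument is essentially a counting-plus-pigeonhole contradiction, and I do not anticipate a serious obstacle. The only point requiring a little care is the boundary case $i = 1$, which is handled cleanly by the convention $m_0 = 0 \le T_{k-1}$. Conceptually, the content of (b) is that, for the constant word, the spacing condition $\tau_k \le M$ not only suffices for seeing (by (a)) but is also necessary, because a single oversized spacing creates an impassable block of opposite-valued letters in $Y$. This is in contrast to general words, where a large $\tau_k$ need not obstruct an admissible embedding, as the example following the Proposition's statement illustrates.
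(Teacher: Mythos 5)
Your proof is correct and follows essentially the same route as the paper: part (a) is the identical observation that $(T_1,\dots,T_n)$ is itself an admissible embedding, and part (b) rests on the same idea that an oversized spacing $\tau_k>M$ produces a block of $M$ consecutive wrong letters that no admissible embedding of the constant word can cross. The only cosmetic difference is that the paper deduces (b) by citing Proposition \ref{P:firstbound} ($T_n\le m_n$), whereas you give a self-contained contrapositive/pigeonhole argument; both are sound.
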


\begin{proof} For part (a), note that if $\tau_i\leq M$ for each $i\leq n$, 
then $W$ is \mseen\ at $(T_1,\dots ,T_n)$. For part (b), suppose that 
the constant word $(a,a,\dots ,a)$ of length $n$ is seen at
$(m_1,\dots ,m_n)$  where $1 \le m_i-m_{i-1}\leq M$ for each $i$, i.e. 
up to location $m_n$ there is no block of $M$ consecutive non $a$'s. 
As $T_n \le m_n$ by Proposition \ref{P:firstbound}, this 
implies $\tau_i\leq M$ for all $i\leq n$. 
\end{proof}

\begin{Prop}\label{P:secondbound} Let $W=(w_1,\dots , w_n)$. 
Suppose that $W$ is seen at $(m_1,\dots ,m_n)$ and
that $0<m_{i+1}-m_i\leq M$ for each $i$. If $w_k\neq w_{k+1}=\cdots =w_l$ for  some $k+1 \le l$ 
and if $\tau_l>M$, then $T_l\leq m_{k+1}$.
\end{Prop}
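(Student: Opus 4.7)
My plan is to proceed by contradiction. Set $a = w_{k+1} = \cdots = w_l$, the common letter, so that $w_k \ne a$. The hypothesis $\tau_l > M$ is equivalent to the statement that the interval $(T_{l-1}, T_{l-1}+M]$ contains no $a$, because $T_l$ is by definition the first $a$-position strictly after $T_{l-1}$. The strategy is to suppose $T_l > m_{k+1}$, then identify the spot where the embedding $(m_i)$ is forced to jump across this $a$-free window of length $M$, and derive a violation of the step-size bound $m_{i+1} - m_i \le M$.

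Concretely, I would first observe that if $T_l > m_{k+1}$, then since $Y_{m_{k+1}} = a$ and $m_{k+1} < T_l$, the definition of $T_l$ forces $m_{k+1} \le T_{l-1}$. On the other hand, Proposition \ref{P:firstbound} gives $T_l \le m_l$, so $m_l \ge T_l > T_{l-1}$. Because the $m_i$ are strictly increasing, I can then pick the largest $j^* \in \{k+1, \dots, l-1\}$ with $m_{j^*} \le T_{l-1}$; this $j^*$ exists and satisfies $m_{j^*+1} > T_{l-1}$. Since $k+2 \le j^*+1 \le l$, the hypothesis $w_{k+1} = \cdots = w_l = a$ gives $Y_{m_{j^*+1}} = a$, so $m_{j^*+1}$ is an $a$-position strictly after $T_{l-1}$. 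By the $a$-free window observation, $m_{j^*+1} \ge T_l > T_{l-1} + M \ge m_{j^*} + M$, contradicting $m_{j^*+1} - m_{j^*} \le M$. The edge case $k+1 = l$ collapses, since the desired conclusion $T_l \le m_{k+1} = m_l$ then follows immediately from Proposition \ref{P:firstbound}.

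I do not foresee a serious obstacle: the main conceptual step is recognizing the forbidden $a$-free window of length $M$ produced by $\tau_l > M$, and exploiting the step-size constraint to argue that no admissible embedding made of $a$'s can traverse this window from the left side of $T_{l-1}$ to the right side of $T_l$. Once the pigeonhole index $j^*$ is chosen as the last $m$-position at or before $T_{l-1}$, the inequalities chain together in a single line, and no further case analysis is needed.
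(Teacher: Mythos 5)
Your argument is correct and is essentially the paper's proof: both rest on the observation that $\tau_l>M$ creates an $a$-free window of length at least $M$ between $T_{l-1}$ and $T_l$, which cannot fit between two consecutive $a$-carrying positions $m_i,m_{i+1}$ with $m_{i+1}-m_i\le M$, together with $T_l\le m_l$ from Proposition \ref{P:firstbound} to rule out the window lying to the right of $m_l$. You merely recast the paper's direct "the block must lie before $m_{k+1}$ or after $m_l$" dichotomy as a contradiction with an explicit crossing index $j^*$, which is a harmless (and slightly more detailed) presentation of the same idea.
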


\begin{proof} 
Let $a=w_{l}$. In between locations 
$T_{l-1}$ and $T_l$ there is a block of at least $M$ consecutive non $a$'s. As there have to be $a$'s at locations 
$m_{k+1} < \cdots < m_{l}$ and we have $m_{i+1}-m_i\leq M$,
this block has to be before location $m_{k+1}$
or after location $m_l$, i.e. $T_l \le m_{k+1}$ or 
$m_l \le T_{l-1}$. 
By Proposition \ref{P:firstbound} we have $T_l \le m_l$, so the second alternative is not possible.
\end{proof} 

\begin{Prop}\label{P:alternating}  Let $A_n$ be an alternating word of length $n$. Then
$A_n$ is \mseen\ if and only if 
\begin{equation}
T_k\leq kM\text{ for all } 1\leq k\leq n\quad\text{ and }
\quad T_k-T_j<(k-j+1)M\text{ for all }0\leq j<k\leq n.
\label{alternating1}
\end{equation}
\end{Prop}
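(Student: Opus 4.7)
For necessity ($\Rightarrow$), suppose $A_n$ is $M$-seen at $(m_1, \ldots, m_n)$. The first bound $T_k \le kM$ is immediate from Proposition \ref{P:firstbound}. For the second bound, I would fix $0 \le j < k \le n$ and combine Proposition \ref{P:firstbound} ($T_k \le m_k$) with the step-size constraint $m_k \le m_j + (k-j)M$ to obtain $T_k - T_j \le (m_j - T_j) + (k-j)M$. The task therefore reduces to showing $m_j - T_j \le M - 1$, at least for some suitably chosen embedding. For the alternating word, the positions where $Y$ equals the required letter form well-separated runs, and I would show that every admissible embedding can be replaced by one with $m_j \in [T_j, T_j + M - 1]$ for each $j$. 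The idea is to inductively pull each $m_j$ back to the earliest position in the run of $Y$ containing $T_j$ that remains reachable from $m_{j-1}$ in one $M$-step; the alternating structure of $Y$'s runs ensures this modification preserves admissibility.

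For sufficiency ($\Leftarrow$), assume both conditions. I would construct an admissible embedding by induction on $k$, choosing $m_k$ to be the smallest position $p$ with $m_{k-1} < p \le m_{k-1} + M$ and $Y_p = w_k$. When $\tau_k \le M$, the choice $m_k = T_k$ suffices, using only the first condition. When $\tau_k > M$, a long $w_{k-1}$-run must be bridged, and the second condition $T_k - T_j < (k-j+1)M$ precisely bounds the total slack available to position the earlier $m_j$'s late enough inside their respective runs that the jump to $m_k$ stays within $M$. The inductive step reduces to verifying that a valid $m_k$ always exists under the two hypotheses, which one can check by contradiction: any failure forces either $T_k > kM$ or $T_k - T_j \ge (k-j+1)M$ for some $j$.

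The main obstacle will be the necessity direction, specifically the displacement bound $m_j - T_j \le M - 1$. This is not an automatic consequence of $M$-admissibility, but follows from the special structure of alternating words: runs of $0$'s and $1$'s in $Y$ alternate cleanly, which lets us pull any embedding back toward the canonical sequence $(T_1, \ldots, T_n)$. This restriction on $W$ is essential and aligns with the observation (noted in Section 3) that the spacing variables alone do not determine whether a general word is $M$-seen.
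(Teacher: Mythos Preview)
Your sufficiency argument has a genuine gap. You propose to build the embedding greedily by taking $m_k$ to be the \emph{smallest} position $p$ with $m_{k-1}<p\le m_{k-1}+M$ and $Y_p=w_k$. This greedy-earliest rule can fail even when \eqref{alternating1} holds. Take $M=3$, $A_3=(1,0,1)$, and $Y=(1,0,0,0,0,1,\dots)$, so that $T_1=1$, $T_2=2$, $T_3=6$; one checks all inequalities in \eqref{alternating1}. Greedy-earliest gives $m_1=1$, $m_2=2$, and then no $1$ exists in $(2,5]$, so the construction aborts --- yet $(1,4,6)$ is a valid $3$-embedding. Your later remark that the second condition lets one ``position the earlier $m_j$'s late enough'' is exactly the point, but it contradicts the greedy-earliest rule you started with. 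The paper resolves this by going the other way: it sets $S_0=0$, $S_k=\min\{T_{k+1}-1,\,S_{k-1}+M\}$ (greedy-\emph{latest} within the current run), and shows by a short induction that \eqref{alternating1} forces $T_k\le S_k$, whence $(S_1,\dots,S_n)$ is an admissible embedding.

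For necessity your plan is also shakier than it needs to be. You want to replace a given embedding by one with $m_j-T_j\le M-1$ and then read off $T_k-T_j\le (m_j-T_j)+(k-j)M<(k-j+1)M$. Such a replacement does exist (the same $S_k$ above satisfies $S_k-T_k\le M-1$), but your ``pull back to the earliest reachable position'' runs into the same trap as before: once you pull $m_{j-1}$ back too far, a large $\tau_j$ can make the run $[T_j,T_{j+1})$ unreachable. The paper sidesteps this entirely with a direct counting argument: since $(Y_i:T_j\le i<T_k)$ consists of exactly $k-j$ constant blocks, at most $k-j$ consecutive $m_i$'s lie in $[T_j,T_k)$, so $T_k-T_j<m_{l+r+1}-m_l\le (r+1)M\le (k-j+1)M$. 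This avoids any modification of the embedding and works uniformly for every admissible $(m_1,\dots,m_n)$.
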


\begin{proof} A special property of an alternating word is that 
\begin{equation}
Y_i=w_k \text{ for } T_k\leq i<T_{k+1}.\label{special}
\end{equation}
Define
$$
S_k=\min\{T_{k+1}-1,S_{k-1}+M\},\quad S_0=0,\quad\sigma_k=S_k-S_{k-1}.
$$
Note that $S_k<T_{k+1}$ and $\sigma_k\leq M$. Since $T_k$ is strictly increasing in $k$, we see inductively
that $S_k$ is strictly increasing in $k$ also. Therefore $\sigma_k\geq 1$ for all $k\geq 1.$
Consider the statement
\begin{equation}T_k\leq S_k\text{ for all }1\leq k\leq n.\label{alternating2}\end{equation}
We will prove the following implications:
$$
\eqref{alternating2}\Rightarrow A_n \text{ is \mseen }\Rightarrow\eqref{alternating1}
\Rightarrow\eqref{alternating2}.
$$

First suppose that \eqref{alternating2} holds. Then $T_k\leq S_k<T_{k+1}$,
so that $Y_{S_k}=w_k$ by \eqref{special}. Since $\sigma_k\leq M$, it follows that $A_n$ is \mseen,
since it is seen at $(S_1,\dots ,S_n)$.

Next assume that $A_n$ is seen at $(m_1,\dots ,m_n)$ where $m_i-m_{i-1}\leq M$ for all $i$. The first
part of \eqref{alternating1} follows from Proposition \ref{P:firstbound}. To prove the second part, let $0\leq j<k\leq n$. 
As $(Y_i: T_j\leq i<T_k)$ consists of $k-j$ constant blocks, 
the interval $[T_j,T_k)$ contains at most $k-j$  consecutive 
elements from $m_1,\dots , m_n$, i.e. 
$m_l < T_j \le m_{l+1} \le m_{l+r} < T_k \le m_{l+r+1}$ 
for some $l$ and $r \le k-j$. 
So $T_k - T_j < m_{l+r+1} - m_1 = \sum_{i=l}^{l+r} (m_{i+1} - m_i) 
\leq (r+1)M$.

Finally, assume that \eqref{alternating1} holds. To prove \eqref{alternating2}, we will prove the statement 
\begin{equation}
T_k\leq S_i+(k-i)M\label{inequalityTk1}
\end{equation}
by induction on $i$ (for fixed $k$). When $i=0$, \eqref{inequalityTk1} becomes $T_k\leq kM$, which is part of
assumption \eqref{alternating1}. When $i=k$, \eqref{inequalityTk1} is $T_k\leq S_k$, which is the desired conclusion in \eqref{alternating2}.
For the induction step, suppose \eqref{inequalityTk1} holds for $i$ with $0\leq i<k$. To prove it for $i+1$, we need
to check that 
\begin{equation}
T_k\leq \min\{T_{i+2}-1,S_i+M\}+(k-i-1)M.\label{inequalityTk2}
\end{equation}
The fact that $T_k\leq T_{i+2}-1+(k-i-1)M$ follows from \eqref{alternating1}, while $T_k\leq S_i+(k-i)M$
is just the induction hypothesis. This proves \eqref{inequalityTk2}.
\end{proof}

\section{Two-block words}
We prove Theorem 1(b) in this section. For $p,q,j\geq 0$,
define
\begin{align*}
&\sigma_{p,j}=P(\tau_1\leq M,\dots ,\tau_p\leq M,\, T_{p+j}>pM), \\
&\sigma_{p,j}'=P(\tau_1\leq M,\dots ,\tau_p\leq M,\, T_{p+j} \le pM)
\end{align*}
and 
$$
u_{p,q}= \alpha^{p+q}+\beta\sum_{j=1}^{q} \alpha^{q-j} \sigma_{p,j}'
= \alpha^p-\beta\sum_{j=1}^q\alpha^{q-j}\sigma_{p,j}.
$$
Here we have used $\sigma_{p,j} + \sigma_{p,j}' = \alpha^p$.
Note that $u_{p,0}=\alpha^p$ and 
$u_{0,q}=\alpha^q$.
The next result will allow us to compute $\sigma_{p,j}$ fairly explicitly.

\begin{Lemma}\label{lemma0} For $p,j \ge 0$ and arbitrary $l$,
$$P(\tau_1\leq M,\dots ,\tau_p\leq M,\, T_{p+j}> lM)=\sum_{i=0}^p\binom pi(-\beta)^iP(T_{p+j}> (l-i)M).$$
\end{Lemma}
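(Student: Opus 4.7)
The plan is to prove the identity by an inclusion--exclusion argument on the conditioning events $\{\tau_k\le M\}$, combined with the memoryless property of the geometric distribution.

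First, I would write
$$
\prod_{k=1}^p 1_{\{\tau_k\le M\}} \;=\; \prod_{k=1}^p\bigl(1 - 1_{\{\tau_k>M\}}\bigr)\;=\;\sum_{S\subseteq\{1,\dots,p\}}(-1)^{|S|}\prod_{k\in S}1_{\{\tau_k>M\}},
$$
so that
$$
P\bigl(\tau_1\le M,\dots,\tau_p\le M,\,T_{p+j}>lM\bigr)=\sum_{i=0}^p(-1)^i\sum_{|S|=i} P\!\left(\bigcap_{k\in S}\{\tau_k>M\}\cap\{T_{p+j}>lM\}\right).
$$

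The heart of the argument is to evaluate the inner probability. Recall that $\tau_1,\tau_2,\dots$ are iid geometric with parameter $\tfrac12$, so for any $m\ge 0$, $P(\tau_k>M+m\mid\tau_k>M)=P(\tau_k>m)$; equivalently, the conditional law of $\tau_k-M$ given $\tau_k>M$ is again geometric with parameter $\tfrac12$. Using independence of the $\tau_k$'s, for any $S\subseteq\{1,\dots,p\}$ with $|S|=i$ the conditional distribution of $T_{p+j}=\sum_{k=1}^{p+j}\tau_k$ given $\{\tau_k>M\text{ for all }k\in S\}$ is the unconditional distribution of $iM + T_{p+j}$, since each conditioning contributes exactly $M$ to the sum on top of an independent geometric. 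Combined with $P(\bigcap_{k\in S}\{\tau_k>M\})=\beta^{i}$, this yields
$$
P\!\left(\bigcap_{k\in S}\{\tau_k>M\}\cap\{T_{p+j}>lM\}\right)=\beta^i\,P\bigl(T_{p+j}>(l-i)M\bigr).
$$

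Summing this over the $\binom{p}{i}$ subsets $S$ of size $i$ and then over $i$ gives
$$
\sum_{i=0}^p\binom{p}{i}(-1)^i\beta^i P\bigl(T_{p+j}>(l-i)M\bigr)=\sum_{i=0}^p\binom{p}{i}(-\beta)^iP\bigl(T_{p+j}>(l-i)M\bigr),
$$
which is the stated identity. The only point requiring some care is the precise formulation of the conditional-distribution statement in the middle paragraph; once the memoryless shift is correctly set up, the remainder is bookkeeping in the inclusion--exclusion sum.
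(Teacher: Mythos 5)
Your proof is correct and rests on the same key fact as the paper's: the memoryless property of the geometric distribution, which shifts the threshold by $M$ for each conditioning event $\{\tau_k>M\}$ and contributes a factor $\beta$. The only difference is organizational --- you expand all $p$ conditions at once by inclusion--exclusion over subsets, whereas the paper peels them off one at a time by induction on $m$ and recovers the binomial coefficients via Pascal's identity; the two are essentially the same argument.
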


\begin{proof} Use the fact that for any geometric random variable $\tau$, the conditional
distribution of $\tau-M$ given $\tau>M$ is the same as the distribution of $\tau$, to write
\begin{align*} 
&P(\tau_1\leq M,\dots ,\tau_m\leq M,\, T_k> lM)-P(\tau_1\leq M,\dots ,\tau_{m+1}\leq M,\, T_k>lM)
\\
&\hskip2cm =P(\tau_1\leq M,\dots ,\tau_m\leq M,\tau_{m+1}>M,\, T_k> lM)\\
&\hskip2cm =
\beta P(\tau_1\leq M,\dots ,\tau_m\leq M,\, T_k > (l-1)M)
\end{align*}
for any $k > m$.
Now use induction on $m$, together with the relation $\binom mi+\binom m{i-1}=\binom
{m+1}i$. 
\end{proof}

\begin{Lemma} \label{lemma1} 
For $p,q\geq 0$, $P(W_{p,q}\text{\rm\ is \mseen})\leq u_{p,q}$.
\end{Lemma}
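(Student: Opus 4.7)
The plan is to proceed by induction on $q$ with $p$ held fixed. For the base case $q=0$, $W_{p,0}$ is the constant word of $p$ ones, so Proposition \ref{P:constant} yields $P(W_{p,0}\text{ is }\mseen)=\alpha^p=u_{p,0}$. For the inductive step I first verify the algebraic recursion $u_{p,q}=\alpha\,u_{p,q-1}+\beta\,\sigma_{p,q}'$, which is immediate from the definition of $u_{p,q}$ by splitting off the $j=q$ term. Writing $V_{p,q}$ for the event that $W_{p,q}$ is \mseen\ and invoking the inductive hypothesis $P(V_{p,q-1})\le u_{p,q-1}$, it then suffices to prove the probability version $P(V_{p,q})\le \alpha\,P(V_{p,q-1})+\beta\,\sigma_{p,q}'$.

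Using $\alpha=1-\beta$, this rearranges to $P(V_{p,q-1}\setminus V_{p,q})\ge\beta\bigl(P(V_{p,q-1})-\sigma_{p,q}'\bigr)$. Since $V_{p,q-1}\subseteq\{\tau_1,\dots,\tau_p\le M\}$, we have $P(V_{p,q-1}\cap\{T_{p+q}\le pM\})\le\sigma_{p,q}'$, and so the target reduces to
\[ P(V_{p,q-1}\setminus V_{p,q})\ge \beta\cdot P\bigl(V_{p,q-1}\cap\{T_{p+q}>pM\}\bigr). \]
My approach is to construct a $\beta$-probability blocking event $C$ with $V_{p,q-1}\cap\{T_{p+q}>pM\}\cap C\subseteq V_{p,q}^c$. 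A natural candidate is $C=\{Y_{pM+1}=\cdots=Y_{pM+M}=1\}$, which has probability $\beta$ and is independent of $\sigma(Y_1,\dots,Y_{pM})$. Under $C$, no $M$-admissible embedding may place a zero-coordinate in the strip $(pM,pM+M]$, so a short induction on the zero-index (using $m_{p+k}\le m_{p+k-1}+M$ together with $Y_{m_{p+k}}=0$) forces every embedding's zero-coordinates to lie in $[1,pM]$. On $V_{p,q-1}\cap\{T_{p+q}>pM\}\cap C$, the condition $T_{p+q}>pM$ leaves only at most $q-1$ zeros in $(T_p,pM]$, all of which are consumed by the $W_{p,q-1}$-embedding; no admissible position remains for a $q$-th zero, and $V_{p,q}$ must fail.

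The hardest step is that $C$ is independent only of $\sigma(Y_1,\dots,Y_{pM})$, whereas $V_{p,q-1}$ can genuinely depend on $Y$-values past $pM$; a direct application of the sketch above therefore only controls $\beta\cdot P(V_{p,q-1}^{(pM)}\cap\{T_{p+q}>pM\})$, where $V_{p,q-1}^{(pM)}$ denotes the sub-event that some admissible embedding of $W_{p,q-1}$ lies entirely within $[1,pM]$. Closing the remaining gap appears to require decomposing $V_{p,q-1}\cap\{T_{p+q}>pM\}$ according to the first index $j\in[1,q]$ for which $\tau_{p+j}>M$, and, for each such $j$, using a blocking $M$-run of $1$'s whose location is tied---via Proposition \ref{P:secondbound}---to $m_{p+1}$ rather than to the fixed position $pM$. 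Summing these $j$-indexed contributions should reproduce precisely the sum $\beta\sum_{j=1}^q\alpha^{q-j}\sigma_{p,j}=\alpha^p-u_{p,q}$ that the bound requires.
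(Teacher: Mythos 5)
There is a genuine gap, and you have located it yourself: the entire induction rests on the single inequality $P(V_{p,q})\le \alpha P(V_{p,q-1})+\beta\sigma'_{p,q}$, and your proposal never establishes it. The reduction to $P(V_{p,q-1}\setminus V_{p,q})\ge \beta\, P(V_{p,q-1}\cap\{T_{p+q}>pM\})$ is correct, and so is the set-theoretic claim that $V_{p,q}\cap\{T_{p+q}>pM\}\cap C=\emptyset$ for $C=\{Y_{pM+1}=\cdots=Y_{pM+M}=1\}$. But to extract the factor $\beta$ you need $P(V_{p,q-1}\cap\{T_{p+q}>pM\}\cap C)\ge \beta\,P(V_{p,q-1}\cap\{T_{p+q}>pM\})$, i.e.\ that $C$ is (at least nonnegatively) correlated with the conditioning event. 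It is not: an admissible embedding of $W_{p,q-1}$ may extend as far as position $(p+q-1)M>pM$ and may need to place zeros inside $(pM,pM+M]$, so forcing that block to be all ones plausibly \emph{decreases} $P(V_{p,q-1})$. Your chain of inequalities therefore delivers something strictly weaker than what the induction step requires, and the closing paragraph (``appears to require\dots should reproduce\dots'') is a sketch of a repair, not a proof. As written, the argument is incomplete.

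The repair you gesture at is essentially the paper's actual proof, which is not inductive at all. One classifies realizations of a seen $W_{p,q}$ by the \emph{largest} index $j\in\{1,\dots,q\}$ with $\tau_{p+j}>M$ (or no such $j$, contributing $\alpha^{p+q}$). For that $j$, Proposition \ref{P:secondbound} applied with $k=p$, $l=p+j$ gives $T_{p+j}\le m_{p+1}\le (p+1)M$, while maximality gives $\tau_{p+j+1}\le M,\dots,\tau_{p+q}\le M$, and Proposition \ref{P:constant}(b) gives $\tau_1\le M,\dots,\tau_p\le M$. These events are disjoint in $j$, the trailing constraints factor off an $\alpha^{q-j}$ by independence, and the memorylessness trick of Lemma \ref{lemma0} converts $P(\dots,\tau_{p+j}>M,\,T_{p+j}\le (p+1)M)$ into $\beta\sigma'_{p,j}$. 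Summing over $j$ yields $u_{p,q}$ directly, with no need for the per-step inequality or any blocking event. If you want to salvage your inductive framing, you would have to prove the step inequality by exactly this kind of $j$-decomposition anyway, at which point the induction buys you nothing.
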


\begin{proof}
If $W_{p,q}$ is seen at  $(m_1,\dots ,m_{p+q})$ where $m_{i+1}-m_i\leq M$
for all $i$, then
$\tau_1\leq M,\dots , \tau_p\leq M$ by Proposition \ref{P:constant}(b). 
Furthermore, if $1 \le j \leq q$ and $\tau_{p+j} >M$, 
then $T_{p+j}\leq m_{p+1} \le (p+1)M$ by Proposition \ref{P:secondbound}. 
Considering the largest $j \ge 1$ (if any) for which $\tau_{p+j}>M$, we see that
$$
P(W_{p,q}\text{ is seen})
\leq 
P(\tau_1\leq M,\dots ,\tau_{p+q}\leq M) 
+\sum_{j=1}^{q} \tilde{\sigma}_{p,j}, 
$$
where 
$$ 
\tilde{\sigma}_{p,j} = P(\tau_1\leq M,\dots ,\tau_p\leq M,\tau_{p+j}>M,\tau_{p+j+1}\leq M,\dots ,\tau_{p+q}\leq M, \,T_{p+j}\leq (p+1)M).
$$
Using the same trick as in the proof of the previous lemma  we obtain  
$$ 
\tilde{\sigma}_{p,j} = 
\alpha^{q-j}P(\tau_1\leq M,\dots ,\tau_p\leq M,\tau_{p+j}>M,\,T_{p+j}\leq (p+1)M)
= \alpha^{q-j} \beta \sigma'_{p,j},
$$
so the result follows by definition of $u_{p,q}$.
\end{proof}

For any function $f_{p,q}$, $p,q\geq 0$ , define a generalized mixed second derivative by
$$
\Delta f_{p,q}=f_{p+1,q+1}-M\beta f_{p,q+1}-(\alpha-\beta)f_{p+1,q}+\beta(M-2\alpha)f_{p,q}.
$$
This particular choice of coefficients is designed to correspond to the coefficients in
\eqref{singlerecursion}. In order to do so, the middle coefficients would have to
sum to $-(\alpha+(M-1)\beta)$. This particular decomposition was chosen by computing
numerically $\Delta u_{p,q}$ for various values of the parameters, 
and checking to see which one made this expression $\leq 0$.

\begin{Lemma}\label{lemma2}  $\Delta u_{p,q}\leq 0$ for $p,q\geq 0$.\end{Lemma}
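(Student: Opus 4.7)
The plan is to prove $\Delta u_{p,q}\le 0$ by direct expansion of $\Delta$ on the formula $u_{p,q} = \alpha^p - \beta\sum_{j=1}^q \alpha^{q-j}\sigma_{p,j}$, and then to handle the resulting inequality among the spacing probabilities $\sigma_{p,j}$ by a conditioning argument on the $\tau_i$ combined with Lemma~\ref{lemma0}.

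The first step is to isolate and cancel the ``pure power'' part. Substituting, the coefficient of $\alpha^p$ in $\Delta u_{p,q}$ works out to
\[
\alpha - M\beta - \alpha(\alpha-\beta) + \beta(M-2\alpha) = \alpha(1-\alpha-\beta) = 0
\]
thanks to $\alpha+\beta=1$, so these terms vanish and $\Delta u_{p,q} = -\beta A$ with
\[
A = \sigma_{p+1,q+1} - M\beta\,\sigma_{p,q+1} + \beta\sum_{j=1}^q \alpha^{q-j}\bigl[\sigma_{p+1,j} + (M\beta-2\alpha)\sigma_{p,j}\bigr].
\]
It then suffices to show $A\ge 0$.

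The second step is to relate $\sigma_{p+1,j}$ to $\sigma_{p,j}$ by conditioning on the extra spacing variable $\tau_{p+1}$; independence of the $\tau_i$ and their geometric distribution give
\[
\sigma_{p+1,j} = \sum_{k=1}^M 2^{-k}\,P\bigl(\tau_1\le M,\dots,\tau_p\le M,\ T_{p+j} > (p+1)M - k\bigr),
\]
expressing $\sigma_{p+1,j}$ in terms of the same tail-of-$T_{p+j}$ events as $\sigma_{p,j} = P(\tau_1\le M,\dots,\tau_p\le M,\,T_{p+j}>pM)$. Combined with Lemma~\ref{lemma0}, every $\sigma$ in $A$ becomes an explicit $(-\beta)$-polynomial in tail probabilities of the form $P(T_n>lM)$, and one then finishes by collecting coefficients and checking nonnegativity, exploiting monotonicity of $l\mapsto P(T_n>lM)$.

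The main obstacle is this final verification. Individual summands of $A$ need not be nonnegative: for $M=2$ and $p=j=1$ one has $\sigma_{p,j}=\tfrac12$, $\sigma_{p+1,j}=\tfrac14$, and $M\beta-2\alpha=-1$, so the bracketed term $\sigma_{p+1,j}+(M\beta-2\alpha)\sigma_{p,j}$ equals $-\tfrac14$, and the cancellation in $A$ is global rather than term by term. As the authors remark, the coefficients $1,-M\beta,-(\alpha-\beta),\beta(M-2\alpha)$ were tuned numerically and do not correspond to a clean factorization of the characteristic polynomial of \eqref{singlerecursion}, so I expect the verification to require a delicate algebraic identity on the negative binomial tails $P(T_n > lM)$. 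A possible alternative is induction on $q$: the recursion $u_{p,q+1} = \alpha u_{p,q} + \beta\sigma'_{p,q+1}$ yields $\Delta u_{p,q+1} = \alpha\,\Delta u_{p,q} + \beta\,\Delta\sigma'_{p,q+1}$, reducing matters to $\Delta\sigma'_{p,q+1}\le 0$, with the clean base case $\Delta u_{p,0}=0$ following from the closed form $\sigma_{p,1} = (M\beta)^p$ (obtained via $E[2^{\tau_1}\mathbf{1}(\tau_1\le M)]=M$), which gives $\sigma_{p+1,1} = M\beta\,\sigma_{p,1}$.
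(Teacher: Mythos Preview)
Your reduction is correct and matches the paper: since $\Delta(\alpha^p)=0$, one has $\Delta u_{p,q}=-\beta\,\Delta w_{p,q}$ with $w_{p,q}=\sum_{j=1}^q\alpha^{q-j}\sigma_{p,j}$, and your expression for $A$ is exactly $\Delta w_{p,q}$. The base case $\Delta u_{p,0}=0$ via $\sigma_{p,1}=(M\beta)^p$ is also right.

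The gap is precisely where you flag it. Neither of your two proposed routes closes it. Monotonicity of $l\mapsto P(T_n>lM)$ is far too coarse: after expanding with Lemma~\ref{lemma0} the coefficients in front of the tail probabilities alternate in sign (the $(-\beta)^i\binom pi$), so you would need exact identities, not inequalities, among those tails. And your inductive alternative $\Delta u_{p,q+1}=\alpha\,\Delta u_{p,q}+\beta\,\Delta\sigma'_{p,q+1}$ merely transfers the difficulty to showing $\Delta\sigma'_{p,q+1}\le 0$, which is of the same nature as the original problem; nothing in your outline addresses that.

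The paper's proof supplies the missing idea: pass to the generating function in $j$. Using Lemma~\ref{lemma0} together with \eqref{polynomial} (the finite-difference identity $\sum_i\binom pi(-1)^{p-i}g(i)=0$ for $\deg g<p$), one obtains the closed form
\[
(1-x)\sum_{j\ge1}\sigma_{p,j}x^{j-1}=\beta^p x^{-p}\bigl[(1+x)^M-1\bigr]^p,
\]
whence the generating function of $\Delta w_{p,q}$ factors as this $p$th power times a single polynomial $P(x)$ independent of $p$. Because $P(1)=0$, one writes $P(x)=(1-x)Q(x)$, and $\Delta w_{p,q}\ge 0$ for all $p,q$ reduces to checking that the partial sums of the coefficients of $P$ are nonnegative --- a finite, explicit verification with binomial coefficients. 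This factorization is what makes the ``delicate algebraic identity'' you anticipate tractable; without passing to generating functions it is very hard to see.
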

\begin{proof} If $f_{p,q}=\alpha^p$, then
$$
\Delta f_{p,q}=\alpha^{p+1}-M\beta\alpha^p-(\alpha-\beta)\alpha^{p+1}+\beta(M-2\alpha)\alpha^p=0.
$$
Therefore, if we let
$$
w_{p,q}=\sum_{j=1}^q\alpha^{q-j}\sigma_{p,j},
$$
we have $\Delta u_{p,q}=-\beta\Delta w_{p,q}$, so we need to show that $\Delta w_{p,q}\geq 0.$
We will do so by computing the generating function of this expression as a function of $q$.

By Lemma \ref{lemma0},
$$
\sigma_{p,j}=\sum_{i=0}^p\binom pi(-\beta)^{p-i}P(T_{p+j}>iM).
$$
Since $T_{m}$ is a sum of $m$ independent geometric random variables 
and can thus be interpreted as the waiting time for the $m$-th success 
in a sequence of Bernoulli experiments, and since $S_k$, the number of successes in the first $k$ of these experiments is binomially distributed, 
we have 
$$
P(T_j>k)=P(S_k<j)=\frac 1{2^k}\sum_{l=0}^{j-1}\binom kl.
$$
Therefore, since $2^{-M}=\beta$,
$$\sigma_{p,j}=\sum_{i=0}^p\binom pi(-\beta)^{p-i}\beta^i\sum_{l=0}^{p+j-1}\binom{iM}l
=\beta^p\sum_{i=0}^p\sum_{l=0}^{p+j-1}(-1)^{p-i}\binom pi\binom{iM}l.$$
It follows that for $0<x<1$,
\begin{equation}
(1-x)\sum_{j=1}^{\infty}\sigma_{p,j}x^{j-1}=\beta^p\sum_{i=0}^p\sum_{l=0}^{\infty}
\binom pi\binom{iM}l(-1)^{p-i}x^{(l-p)^+}.\label{generating}
\end{equation}
Note that for every polynomial function $g$ of degree $\deg g < p$ 
we have 
\begin{equation} 
\sum_{i=0}^p\binom pi  g(i) (-1)^{p-i} =0. \label{polynomial}
\end{equation}
It suffices to check this for all polynomial functions of the 
form $g(i) = \binom i l$, $0\leq l<p$: 
$$
\sum_{i=0}^p\binom pi  \binom i l (-1)^{p-i} = 
\binom p l \sum_{i=l}^p \binom {p-l}{p-i} (-1)^{p-i} = \binom p l (1-1)^{p-l} = 0.
$$
Applying \eqref{polynomial} to the functions $g(i) = \binom{iM}{l}$,
$0\leq l<p$, it follows that the positive part at the end of  \eqref{generating} is not needed:
$$\begin{aligned} 
(1-x)\sum_{j=1}^{\infty}\sigma_{p,j}x^{j-1}&=\beta^p\sum_{i=0}^p\sum_{l=0}^{\infty}
\binom pi\binom{iM}l(-1)^{p-i}x^{l-p}\\
&=\beta^px^{-p}\sum_{i=0}^p\binom pi(-1)^{p-i}(1+x)^{iM}\\
&=\beta^px^{-p}[(1+x)^M-1]^p.
\end{aligned}$$
Therefore,
$$(1-x)\sum_{q=1}^{\infty}w_{p,q}x^{q-1}=\frac{1-x}{1-\alpha x}\sum_{j=1}^{\infty}\sigma_{p,j}x^{j-1}
=\frac 1{1-\alpha x}\beta^px^{-p}[(1+x)^M-1]^p.$$
Using this expression, we can write
$$(1-x)\sum_{q=0}^{\infty}\Delta w_{p,q}x^{q-1}=\frac{\beta^{p+1}x^{-p}}{x^2(1-\alpha x)}
[(1+x)^M-1]^pP(x),$$
where
$$P(x)=(1+x)^M[1-(\alpha-\beta)x]-1-(M+\beta-\alpha)x+x^2(M-2\alpha).$$
Note that $P(1)=2^M(1-\alpha+\beta)-2=0$, so we may define a polynomial $Q$ by
$P(x)=(1-x)Q(x)$. If $Q$ has nonnegative coefficients, it will follow that $\Delta w_{p,q}\geq 0$
for all $p,q\geq 0$ as required. The coefficients of $Q$ are the partial sums of the coefficients
of $P$. The constant and linear terms in $P$ vanish. 
The coefficient of $x^2$ is $\binom M2-2(\alpha-M\beta),$
while for $k\geq 3$, the coefficient of $x^k$ is $\binom Mk-(\alpha-\beta)\binom M{k-1}.$
Therefore, we need to check that the following expression is nonnegative for $l\geq 2$:
\begin{align*}
&\binom M2-2(\alpha-M\beta)+\sum_{k=3}^l\bigg[\binom Mk-(\alpha-\beta)\binom M{k-1}\bigg]\\
&\hskip2cm =\binom M l+2\beta\sum_{k=2}^{l-1}\binom Mk-2(\alpha-M\beta)=\binom Ml-2\beta\sum_{k=l}^M\binom Mk.
\end{align*}
This is nonnegative for $2\leq l<M$ since $\beta\sum_{k=0}^M\binom Mk=1,$ and for $l=M$ since
then the right side above is $1-2\beta$.
\end{proof}

\begin{proof}[Proof of Theorem 1(b)] Let
$$\delta_{p,q}=v_{p+q}-u_{p,q},\quad p,q\geq 0.$$
By \eqref{pairrecursion} we have 
$v_{n+1}\geq \alpha v_n$ and thus $v_n\geq\alpha^n$, so 
\begin{equation}
\delta_{p,0} = v_p-\alpha^p \geq 0 \q \text{ and } \q 
\delta_{0,q} = v_q-\alpha^q \geq 0 \q \text{ for all } p,q \ge 0.
\label{deltanull}
\end{equation}
By \eqref{singlerecursion} and Lemma \ref{lemma2} we have 
\begin{align*}
&\delta_{p+1,q+1}-M\beta \delta_{p,q+1}-(\alpha-\beta)\delta_{p+1,q}+\beta(M-2\alpha)\delta_{p,q}\nonumber\\
&\hskip1cm=v_{p+q+2}-(\alpha+(M-1)\beta)v_{p+q+1}+\beta(M-2\alpha)v_{p+q}-\Delta u_{p,q}\geq 0,
\end{align*} 
which can be rewritten as 
\begin{equation}
\delta_{p+1,q+1}-M\beta\delta_{p,q+1} \geq(\alpha-\beta)\delta_{p+1,q}-\beta(M-2\alpha)\delta_{p,q}.
\label{3.6}
\end{equation}
We will now prove by induction on $q$ the statement that $\delta_{p+1,q}\geq M\beta\delta_{p,q}$
for all $p\geq 0$.
By \eqref{deltanull}, for the basis step 
we have to show that
$$
v_{n+1}-\alpha^{n+1}\geq M\beta(v_n-\alpha^n).
$$
This follows from 
$$v_{n+1}-M\beta v_n=(\alpha-M\beta)(v_n+v_n')\geq (\alpha-M\beta)\alpha^n,$$
where we have used \eqref{pairrecursion}, 
$\alpha-M\beta >0$, $v_n' \ge 0$, and $v_n\geq\alpha^n$. 
For the induction step, assume that the statement is
true for a given $q\geq 0$. Using \eqref{deltanull}
it follows that
$\delta_{p,q}\geq 0$ for that $q$ and all $p$. Therefore, since $0\leq M-2\alpha\leq M(\alpha-\beta)$
(which is equivalent to $M\beta\leq\alpha$), \eqref{3.6} can be written as
$$\delta_{p+1,q+1}-M\beta\delta_{p,q+1}\geq\frac{M-2\alpha}M[\delta_{p+1,q}-M\beta\delta_{p,q}]\geq 0,$$
where the final inequality follows from the induction hypothesis. This proves that
$\delta_{p+1,q}\geq M\beta\delta_{p,q}$ for all $p,q\geq 0$, and hence that $\delta_{p,q}\geq 0$
for all $p,q\geq 0$ and therefore $u_{p,q}\leq v_{p+q}$. Now apply Lemma \ref{lemma1} to complete the proof.
\end{proof}

\section{Independence of parameter choice}

Let $X = (X_n: n \ge 1)$ and $Y = (Y_n : n \ge 1)$ 
be two independent Bernoulli sequences with parameters 
$p_X$ and $p_Y$ respectively, and let $W \in \{0,1\}^\NN$ 
be an arbitrary infinite word. 

\begin{Theo}\label{nondependence}
{\rm(a)} The validity of
the assertion \lq\lq$\ \forall M \ge 2: P(X \text{ is \mseen\ in } Y) = 0$" 
does not depend on the values of $p_X,p_Y \in (0,1)$.\\
{\rm(b)} The validity of
the assertion \lq\lq$\ \forall M \ge 2: P(W \text{ is \mseen\ in } Y) = 0$" 
does not depend on the value of $p_Y \in (0,1)$.
\end{Theo}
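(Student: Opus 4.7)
The plan is to construct, for every pair $p,p'\in(0,1)$ with $p>p'$, an \emph{OR-over-blocks} coupling of iid Bernoulli($p$) and iid Bernoulli($p'$) sequences, and the dual AND-over-blocks coupling for $p<p'$. Each such coupling will transport an $M$-admissible embedding in one sequence into an $O(KM)$-admissible embedding in the other, where $K$ depends only on $p,p'$. Using the two couplings together will force the probability of having an admissible embedding (for some $M$) to be monotone both ways in the parameter, hence constant; this yields both parts of the theorem.

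To set up the OR coupling, pick $k_0\in\NN$ with $(1-p')^{k_0+1}\le 1-p\le (1-p')^{k_0}$, and let $K_1,K_2,\dots$ be iid on $\{k_0,k_0+1\}$ chosen so that $E[(1-p')^{K_i}]=1-p$. Partition $\NN$ into successive blocks $B_1,B_2,\dots$ of lengths $K_1,K_2,\dots$, independent of an iid Bernoulli($p'$) sequence $Z$, and define $Y(i):=\max_{j\in B_i}Z(j)$. A short calculation (using independence of the $K_i$'s and of disjoint pieces of $Z$) shows that $Y$ is iid Bernoulli($p$), and every block has length at most $K:=k_0+1$. Replacing $\max$ by $\min$ and solving $E[(p')^{K_i}]=p$ instead gives the dual AND coupling when $p<p'$.

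For part (b), fix $W$ and apply the OR coupling with $p=p_Y$, $p'=p_Y'$. Given an $M$-admissible embedding $(m_i)$ of $W$ in $Y$, choose for each $i$ an index $m_i^*\in B_{m_i}$ with $Z(m_i^*)=W_i$: when $W_i=1$, $Y(m_i)=1$ says the block has at least one $1$ of $Z$; when $W_i=0$, the whole block is $0$ in $Z$. Disjointness of the blocks makes $(m_i^*)$ strictly increasing, and $m_{i+1}^*-m_i^*\le (M+1)K$. Hence the probability that $W$ is seen in $Y$ for some $M$ is bounded above by the corresponding probability for $Z$, giving monotone non-increase in $p_Y$. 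The AND coupling reverses the inequality, so the probability is constant in $p_Y$; in particular, its vanishing is parameter-free.

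For part (a), apply the same construction to $X$ (using (b) to handle $p_Y$). Couple a Bernoulli($p_X$) sequence $X$ to a Bernoulli($p_X'$) sequence $X'$ via $X(k):=\max_{j\in B_k}X'(j)$, with blocks $B_k$ as above (so $p_X>p_X'$). Given an $M$-admissible embedding $(m_j)$ of $X'$ in $Y$, pick for each $k$ an index $j_k\in B_k$ with $X'(j_k)=X(k)$, and set $m_k^*:=m_{j_k}$; this yields an embedding of $X$ in $Y$ with gap at most $(2K-1)M$. The analogous AND construction handles $p_X<p_X'$, and the combination forces constancy in $p_X$ as well, completing (a). The main obstacle is the construction of a coupling with a \emph{bounded} block size: the freedom to choose the block-size distribution on two consecutive integers $\{k_0,k_0+1\}$, rather than a single integer, is exactly what gives Bernoulli($p$) marginals for arbitrary real $p$ while keeping $K$ finite; once $K$ is bounded, the embedding transport is essentially automatic.
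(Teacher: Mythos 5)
Your argument is correct, and it reaches the same key intermediate fact as the paper --- namely that for any $p,p'\in(0,1)$ one can couple a Bernoulli($p$) and a Bernoulli($p'$) sequence so that the first is $M$-seen in the second for some finite $M$ depending only on $p,p'$ --- but by a genuinely different construction. The paper works with deterministic blocks of length $2$ and a randomized replacement rule ($00\mapsto 0$, $11\mapsto 1$, $01,10\mapsto$ an independent Bernoulli($p_1$) letter), which in one step only reaches parameters in $[p^2,1-(1-p)^2]$ and therefore must be iterated, yielding a $3^k$-seen coupling for some $k$; in exchange, a single construction covers both directions of parameter change, and the ``seen'' property is immediate because each new letter is literally $X_{2k}$ or $X_{2k-1}$. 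Your OR/AND couplings over blocks of random length in $\{k_0,k_0+1\}$ reach any target parameter in one step with an explicit gap constant $K=k_0+1$, at the cost of needing two dual constructions and a separate monotonicity argument for each direction (non-increasing via OR, non-decreasing via AND, hence constant); the point that makes the transport work --- if the max over a block is $1$ some entry is $1$, and if it is $0$ every entry is $0$, so an index in the block carrying the required letter always exists --- is correct, as are your verifications that the output sequence is iid (using independence of the $K_i$ and of $Z$ over disjoint blocks) and your gap bounds $(M+1)K$ and $(2K-1)M$. Two small points worth making explicit in a write-up: the couplings of $(Y,Z)$ and of $(X,X')$ must be taken independent of the other sequence (as the paper notes), and the reduction of ``$\forall M:\ P=0$'' to the vanishing of the probability of the union over $M$ uses that the events are increasing in $M$; both are routine.
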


The main idea of the proof of Theorem \ref{nondependence} 
is to use a coupling of two Bernoulli sequences so that one can 
be \mseen\ in the other for sufficiently large $M$. 

\begin{Lemma}\label{lemmacoupling}  
Let $X,X'$ be Bernoulli sequences with parameters $p,p' \in (0,1)$ respectively.\\
{\rm(a)} If $p' \in [p^2,1-(1-p)^2]$, there is a coupling  such that $X'$ can be $3$-seen in $X$.\\
{\rm(b)}
There is an $M \ge 2$ and a coupling such that $X'$ can be \mseen\ in $X$. 
\end{Lemma}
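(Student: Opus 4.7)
The plan is to construct the coupling explicitly for part (a), and then bootstrap to (b) by iterating (a).

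For part (a), I would read $X$ in consecutive pairs $B_i := (X_{2i-1}, X_{2i})$ and define $X'_i$ to be a value that actually appears in $B_i$. Concretely: if $B_i = (0,0)$ set $X'_i = 0$; if $B_i = (1,1)$ set $X'_i = 1$; if $B_i$ is mixed, set $X'_i = 1$ with probability $q$ and $X'_i = 0$ with probability $1-q$, independently of everything else. The marginal is then $P(X'_i = 1) = p^2 + 2p(1-p)q$, which sweeps out $[p^2, 1-(1-p)^2]$ as $q$ ranges over $[0,1]$, so any target $p'$ in that interval is attainable. Independence of the $X'_i$'s follows because the pairs $B_i$ are iid and the auxiliary coins are independent. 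Taking $m_i \in \{2i-1,2i\}$ to be a position in $B_i$ with $X_{m_i} = X'_i$ gives $m_i - m_{i-1} \in \{1,2,3\}$, so $X'$ is $3$-seen in $X$.

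For part (b), the plan is to iterate (a). Writing $I(r) := [r^2, 1-(1-r)^2]$, the easy observations that $r \in I(r)$ and that the endpoints of $I(r)$ depend continuously on $r$ combine to show that the set of parameters reachable from $p$ after $n$ iterations is $I^{(n)}(p) = [p^{2^n}, 1-(1-p)^{2^n}]$. Since $p^{2^n} \to 0$ and $1-(1-p)^{2^n} \to 1$ as $n \to \infty$, for any $p' \in (0,1)$ one can choose $n$ large enough that $p' \in I^{(n)}(p)$ and then extract a chain $p = p_0, p_1, \dots, p_n = p'$ with $p_{k+1} \in I(p_k)$ at each step. Applying (a) successively then yields a joint coupling of iid Bernoulli sequences $X = X^{(0)}, X^{(1)}, \dots, X^{(n)} = X'$ in which $X^{(k+1)}$ is $3$-seen in $X^{(k)}$ via some embedding $m^{(k)}$.

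The main step to verify is that seeings compose with multiplicative $M$: the nested map $\widetilde m_i := m^{(0)}(m^{(1)}(\cdots m^{(n-1)}(i)))$ embeds $X'$ into $X$ by transitivity of values, and the gap bound $\widetilde m_i - \widetilde m_{i-1} \le 3^n$ follows by induction from the basic fact that an embedding with steps bounded by $3$ stretches differences of indices by at most a factor of $3$. Setting $M = 3^n$ then completes (b). The only real delicacy is keeping the coupling chain consistent through many applications of (a); this is handled by using a fresh source of auxiliary randomness at each step so that $X^{(k+1)}$ is built from $X^{(k)}$ alone, which automatically preserves the iid Bernoulli marginals at every level.
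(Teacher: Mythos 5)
Your proposal is correct and follows essentially the same route as the paper: the pairwise block construction with an auxiliary randomized choice on mixed blocks for part (a), and iteration of (a) along a chain of intermediate parameters, composing $3$-seeings into a $3^n$-seeing, for part (b). The only cosmetic difference is that you identify the reachable set after $n$ steps explicitly as $[p^{2^n},1-(1-p)^{2^n}]$, whereas the paper argues via the monotone iterates $f_1^k(p)\to 0$ and $f_2^k(p)\to 1$; both yield the required chain.
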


\begin{proof} {\rm(a)} Let $X$ be a Bernoulli sequence with parameter $p$.
For a given sequence $x \in \{0,1\}^{\mathbb N}$,
we define a random subsequence $F(x)$ by 
partitioning $x$ into disjoint blocks each comprising 2 consecutive letters, 
and replacing every block  $00$ by $0$, $11$ by $1$, and 
$01$ and $10$ by $1$ with probability $p_1$ 
and by $0$ with probability $p_0 = 1-p_1$. This is done 
independently for each block and independently of 
the choice of the Bernoulli sequence $X$. 
The sequence $X' := F(X)$  is a Bernoulli
sequence with parameter
$$
p' = p^2  + 2p(1-p)p_1
$$
such that $X'$ can be 3-seen in $X$ (since for
every $k$, $X'_k=X_{2k}$ or $X_{2k-1}$).
Since $p_1$ can be chosen arbitrarily in $[0,1]$, 
$p'$ has a possible range of $[p^2,1-(1-p)^2]$. \\
{\rm(b)} 
Let $f_1(p) = p^2$ and $f_2(p)= 1-(1-p)^2$.
We note that $f_1(p) \le p \le f_2(p)$ and 
$f_1^k(p) \to 0$  and $f_2^k(p) \to 1$ for $k \to \infty$, 
where $f^k(p) = f \circ \cdots \circ f (p)$. Thus for arbitrary 
given $p,p' \in (0,1)$, there exist $p_0,\ldots,p_k \in (0,1)$ 
such that $p_0 = p$, $p_k = p'$ and $p_{i+1} \in [f_1(p_i),f_2(p_i)]$
for all $i$. Thus by {\rm(a)} there exist Bernoulli sequences 
$X^{(0)}, \ldots, X^{(k)}$ such that $X^{(i)}$ has parameter $p_i$ 
and $X^{(i+1)}$ can be $3$-seen in $X^{(i)}$ for all $i$. 
Thus $X' := X^{(k)}$ can be $3^k$-seen in $X := X^{(0)}$.
\end{proof}

\begin{proof}[Proof of Theorem \ref{nondependence}]
{\rm(a)} Let $p_X,p_Y,p_X',p_Y' \in (0,1)$. By Lemma 
\ref{lemmacoupling} there are Bernoulli sequences $X,Y,X',Y'$ 
with these parameters such that $X'$ can be $M_X$-seen in 
$X$, $Y$ can be $M_Y$-seen in $Y'$, and $(X,X')$ is independent 
of $(Y,Y')$. In particular  we have 
$$
\{X \text{ is \mseen\ in }Y\} \subset \{X' \text{ is $M_XM_YM$-seen in }Y'\} \q \text{ for all } M.
$$
Thus the validity of \lq\lq$\forall M' \ge 2: P(X' \text{ is $M'$-seen in } Y') = 0$"
implies the validity of \lq\lq$\forall M \ge 2: P(X \text{ is \mseen\ in } Y) = 0$". \\
{\rm(b)} Let $p_Y,p_Y' \in (0,1)$. By Lemma 
\ref{lemmacoupling} there are Bernoulli sequences $Y,Y'$ 
with these parameters such that $Y$ can be $M_Y$-seen in $Y'$. In particular  we have 
$$
\{W \text{ is \mseen\ in }Y\} \subset \{W \text{ is $M_YM$-seen in }Y'\} \q \text{ for all } M.
$$
Thus the validity of \lq\lq$\forall M' \ge 2: P(W \text{ is $M'$-seen in } Y') = 0$"
implies the validity of \lq\lq$\forall M \ge 2: P(W \text{ is \mseen\ in } Y) = 0$". \end{proof}

\section{Variance of the number of embeddings}

We prove Theorem \ref{grg}.
Let $W = (w_1,w_2,\dots,w_n)$ be a word of length $n$, and let
$j=(j_0,j_1,\dots,j_n)$ and $k=(k_0,k_1,\dots,k_n)$ be strictly increasing sequences
of integers with $j_0=k_0=0$ and gaps not exceeding $M$. Then
\begin{align*}
E(N_n^2) &= \sum_{j,k}P(\text{$j$ and $k$ are $M$-admissible embeddings})\\
&= \sum_{j,k} (\tfrac12)^{|j\cup k|}I_{j,k} = \sum_{j,k} (\tfrac12)^{2n-|j\cap k|}I_{j,k},
\end{align*}
where 
$$
I_{j,k} = I_{j,k}(W) = \prod_{(r,s)\ne 0:\, j_r=k_s} 1(w_r = w_s),
$$
$j\cup k = \{j_1,j_2\dots,j_n\}\cup
\{k_1,k_2,\dots,k_n\}$ viewed as a set, and $|j \cap k| = |\{(r,s) \ne (0,0) : j_r = k_s\}|$.
Therefore,
$$
E(N_n^2) = (M/2)^{2n} E(2^{|J\cap K|}I_{J,K}),
$$
as claimed in part (a).

Part (b) follows from the fact that, for the random word $X$,
$$
E(I_{j,k}(X)) = (\tfrac12)^{|j\cap k| - Z(j,k)},
$$
where $Z(j,k) = |\{l \ne 0: j_l=k_l\}|$.

To determine the asymptotics of $E(2^{Z_n})$, we proceed as follows. Let $\tau, \tau_1,
\tau_2,...$ be iid with the distribution of the hitting time of 0 for the random walk 
$J-K$ starting at 0. Then $P(Z_n\geq k)=P(\tau_1+\cdots+\tau_k\leq n)$, so
$$
E(2^{Z_n})=1+\sum_{k=1}^{\infty}2^{k-1}P(\tau_1+\cdots+\tau_k\leq n),
$$
and therefore, for small positive $x$,
\begin{equation}
(1-x)\sum_{n=0}^{\infty}x^nE(2^{Z_n})=\frac{1-Ex^{\tau}}{1-2Ex^{\tau}}.
\label{genftn}
\end{equation}
The function $x\rightarrow Ex^{\tau}$ is smooth on $[0,1)$, so if we define $c=c_M>1$ by
$Ec^{-\tau}=\frac12$, the right side of \eqref{genftn} is asymptotic to a constant multiple
of $(1-cx)^{-1}$ as $x\uparrow c^{-1}$. By a Tauberian theorem (e.g., Theorem 5
in Section XIII.5 of \cite{FII}), it follows that $E(2^{Z_n})$ is asymptotic to a constant
multiple of $c^n$. When $M=2$, $Ex^{\tau}=1-\sqrt{1-x}$ (see, for example,
Section XIV.4 of \cite{FI}), so $c_2=\frac43$.

In order to get full benefit from the Tauberian theorem, we need to know that the sequence $E(2^{Z_n})c^{-n}$ is
monotone; otherwise we would only have convergence in the Ces\`aro sense. 
We check this next. Let $u_n=P(J_n-K_n=0)$, $v_0=1,$
\begin{equation} v_n=\sum_{k=1}^nu_kv_{n-k},\quad n\geq 1,\label{renewal}\end{equation}
and $V_n=\sum_{k=0}^n v_k$. We will check that
\begin{equation}
u_n\downarrow,\label{down}\end{equation}
\begin{equation}V_n^2\geq V_{n+1}V_{n-1}, \quad n\geq 1
\label{logconvex}\end{equation}
and 
\begin{equation} 
E(2^{Z_n})=V_n.
\label{identity}\end{equation}
Once these are checked, it will follow from \eqref{logconvex} that $V_{n+1}/V_n$ is
decreasing, and by \eqref{identity} and the Ces\`aro convergence noted above, 
$V_{n+1}/V_n \downarrow c$. Therefore $V_{n+1}\geq cV_n$, so $V_n/c^n$ is increasing as required.

To check \eqref{down}, note first that by the Schwarz inequality,
\begin{align*}P(J_n-K_n=m)&=\sum_lP(K_n=l)P(J_n=l+m)\\&\leq\sqrt{\sum_lP^2(K_n=l)
\sum_lP^2(J_n=l+m)}\\&=\sum_lP^2(K_n=l)=u_n.\end{align*}
Then use this to write
$$u_{n+1}=\sum_mP(J_n-K_n=m)P(J_1-K_1=-m)\leq u_n.$$

We check \eqref{logconvex} by induction on $n$. Note that it is equivalent to
\begin{equation}
\frac{v_n}{V_{n-1}}\geq \frac{v_{n+1}}{V_{n}}.
\label{ratio}\end{equation}
Dividing \eqref{renewal} by $V_{n-1}$ expresses the left side of \eqref{ratio} as an
average of  $u_1,\dots,u_n$, and of course the right side is an average of
$u_1,\dots,u_{n+1}$. By \eqref{down}, it suffices to check that the two averaging measures are
stochastically ordered. But this is equivalent to $V_{n-1}V_j\geq V_nV_{j-1}$
for $1\leq j\leq n$, which is a consequence of the induction hypothesis.

Finally, we check \eqref{identity}. Write
$$
E(2^{Z_n})=E\prod_{i=1}^n\bigg(1+1(J_i=K_i)\bigg)=\sum_{A\subseteq\{1,\dots,n\}}
P(J_i=K_i,\, \forall\ i\in A).$$
So, it is enough to show that
$$
v_n=\sum_{n\in A\subseteq\{1,\dots,n\}}P(J_i=K_i,\, \forall\ i\in A),
$$
or equivalently, that the right side above satisfies the recursion \eqref{renewal}.
But this is easily checked by breaking up the sum according to the value
of the smallest element $k$ of $A$.

\section*{Acknowledgments} 
We acknowledge a helpful observation of Alexander Holroyd.
This work was done in part during visits of the first author
to the University of California, Los Angeles, and to the Isaac Newton Institute, Cambridge.

\bigskip

\ni  Statistical Laboratory

\ni University of Cambridge

\ni Wilberforce Road

\ni Cambridge CB3 0WB, UK

\bigskip

\ni email: g.r.grimmett@statslab.cam.ac.uk

\bigskip

\noindent Department of Mathematics

\noindent University of California, Los Angeles

\noindent 405 Hilgard Ave.

\noindent Los Angeles CA 90095, USA
\bigskip

\noindent email: tml@math.ucla.edu

\ni email: richthammer@math.ucla.edu

\end{document}